\newcommand{\nat}[1]{{#1}^{\natural}}
\newcommand\bse{{\boldsymbol e}}
\newcommand\bsf{{\boldsymbol f}}
\newcommand\BZ{{\mathbb Z}}
\newcommand\Coker{\operatorname{Coker}}
\newcommand{\cone}{\operatorname{Cone}}
\newcommand{\depth}{\operatorname{depth}}
\newcommand\edim{\operatorname{edim}}
\newcommand{\Tor}[4]{\operatorname{Tor}_{#1}^{#2}(#3,#4)}
\newcommand\fm{{\mathfrak m}}
\newcommand\fn{{\mathfrak n}}
\newcommand\fp{{\mathfrak p}}
\newcommand{\hh}[1]{\operatorname{H}(#1)}
\newcommand{\HH}[2]{\operatorname{H}_{#1}(#2)}
\newcommand\idmap{\operatorname{id}}
\newcommand\Image{\operatorname{Im}}
\newcommand\Ker{\operatorname{Ker}}
\newcommand{\ges}{\geqslant}
\newcommand{\lra}{\longrightarrow}
\newcommand{\wh}{\widehat}
\newcommand{\wt}{\widetilde}
\newcommand{\pdim}{\operatorname{proj\,dim}}
\newcommand{\rank}{\operatorname{rank}}
\newcommand{\shift}{{\sf\Sigma}}
\newcommand{\xla}{\xleftarrow}
\newcommand{\xra}{\xrightarrow}
\newcommand{\poin}[2]{\operatorname{P}^{#1}_{#2}(t)}
\newtheorem{theorem}{Theorem}[section]
\newtheorem{proposition}[theorem]{Proposition}
\newtheorem{lemma}[theorem]{Lemma}
\newtheorem{corollary}[theorem]{Corollary}
\theoremstyle{definition}
\newtheorem{example}[theorem]{Example}
\newtheorem{chunk}[theorem]{}
\theoremstyle{remark}
\newtheorem{remark}[theorem]{Remark}
{}
{}
\newtheorem*{Remark}{Remark}
\numberwithin{equation}{theorem}
\begin{document}

\title[Homology over trivial extensions]{Homology over trivial extensions of \\ commutative DG algebras}

\author[L.~L.~Avramov]{Luchezar L.~Avramov}
\address{Department of Mathematics,
University of Nebraska, Lincoln, NE 68588, U.S.A.}
\email{avramov@math.unl.edu}

\author[S.~B.~Iyengar]{Srikanth B.~Iyengar}
\address{Department of Mathematics,
University of Utah, Salt Lake City, UT 84112, U.S.A.}
\email{iyengar@math.utah.edu}

\author[S.~Nasseh]{Saeed Nasseh}
\address{Department of Mathematical Sciences,
Georgia Southern University, Statesboro, GA 30460, U.S.A.}
\email{snasseh@georgiasouthern.edu}

\author[S.~Sather-Wagstaff]{Sean Sather-Wagstaff}
\address{Department of Mathematical Sciences, O-110 Martin Hall, Box 340975,
Clemson, S.C. 29634, U.S.A.}
\email{ssather@clemson.edu}

\thanks{Avramov and Iyengar were partly supported by NSF grants DMS-1103176 (LLA) and DMS-1503044 (SBI).
Sather-Wagstaff was supported in part by a grant from the NSA}

\date{\today}

\keywords{local DG algebra, perfect DG module, vanishing of Tor}
\subjclass[2010]{13D07 (primary); 16E45, 13D02,  13D40  (secondary)}

\begin{abstract} 
Conditions on the Koszul complex of a noetherian local ring $R$ guarantee that $\mathrm{Tor}^{R}_{i}(M,N)$ is 
non-zero for infinitely many $i$, when $M$ and $N$ are finitely generated $R$-modules of infinite projective 
dimension.  These conditions are obtained from results concerning Tor of differential graded modules over 
certain trivial extensions of commutative differential graded algebras.
\end{abstract}

\maketitle
   
\section*{Introduction}
This paper is motivated by, and feeds into our work in \cite{AINW}, which is concerned with the following problem: Given a 
commutative  noetherian ring $R$ and a finitely generated $R$-module $M$, does $\Tor iRMM=0$ for $i\gg 0$ imply that 
the projective dimension of $M$ is finite? Similar questions have arisen in the literature, also in certain non-commutative 
contexts; we refer the reader to \cite{AINW} for a  discussion.

When $R$ is complete intersection, by using their theory of cohomological support varieties Avramov and Buchweitz~\cite{AB}
answered that question in the positive and showed the failure in codimension two or higher of the following stronger property:
  \begin{equation}
    \label{eq:star}
      \tag{$*$}
\Tor iRMN=0 \text{ for $i\gg 0$ implies $\pdim_RM<\infty$ or $\pdim_RN<\infty$}\,. 
\end{equation} 

On the other hand, work of Huneke and Wiegand \cite{HW} and Jorgensen \cite{Jo} shows that $(*)$ does hold for Golod rings.  
More recently, Nasseh and Yoshino~\cite{NY} proved it for local rings whose maximal ideal requires a generator from the
socle.  Such rings are trivial extensions of the form $S\ltimes W$, where $S$ is a local ring and $W$ is a non-zero finitely 
generated $S$-module, annihilated by the maximal ideal of $S$.  

Even when a local ring is not a trivial extension, its Koszul complex---viewed as a differential graded (DG) algebra---may have such a structure. The goal of this paper is to prove that then the implication ($*$) still holds.  This is achieved in Theorem \ref{th:local}, which is deduced from much more general results concerning non-vanishing of Tor of DG modules over certain trivial extensions of DG algebras.

The substance of the paper is the development of techniques needed to state and prove this result; see Theorems~\ref{thm:dgfriendly}
and \ref{thm:dgfriendly2}, which in Proposition \ref{prop:Golod} give unified proofs of the results in \cite{HW, Jo, NY}.  Along the way, in 
Theorem~\ref{th:cdgretracts}, we obtain for retracts of augmented DG algebras a result that implies Herzog's \cite{He} computation of 
Poincar\'e series of modules over retracts of local rings; see Proposition \ref{prop:Herzog}. 

\section{Retracts of DG algebras}

In this section we establish statements concerning $\operatorname{Tor}$ functors of differential graded (DG) modules over retracts of 
DG algebras. Some basic definitions and constructions concerning DG algebras and their DG modules are recapped in an appendix
to the paper, to which there are frequent references throughout the text.

In the following paragraphs, we often consider bimodules: When $B,C$ are DG algebras, by a DG $BC$-bimodule we mean a complex
of abelian groups with compatible structures of a left DG $B$-module and a right DG $C$-module.  
 
\begin{chunk}
Let $\beta\colon B\to C$ be a morphism of DG algebras and $M$ a left DG $C$-module. We write ${}^{\beta}M$ for $M$ viewed as a left DG $B$-module by restriction of scalars along $\beta$; similarly for right DG modules. It is a routine verification that the maps
\begin{equation}
\label{eq:product-retract}
  \begin{aligned}
\iota^M\colon {}^{\beta}M &\lra {}^{\beta}(C^{\beta}\otimes_{B} {}^{\beta}M) \\
\iota^{M}(m)&\ =\ 1\otimes m
\end{aligned}
 \qquad \text{and} \qquad
\begin{aligned}
\mu^{M}\colon  C^{\beta}\otimes_{B} {}^{\beta}M &\lra M\\
 \mu^{M}(c\otimes m) &\ =\  cm
\end{aligned}
\end{equation}
are morphism of left DG $B$-modules and DG $C$-modules, respectively.  Note that 
   \begin{align}
     \label{eq:decomposition_iota}
\text{the composed map }
&{}^{\beta}M \xra{\ \cong\ } B\otimes_{B}{}^{\beta}M \xra{\ \beta\otimes_{B}{}^{\beta}M\ }  {}^{\beta}(C^{\beta}\otimes_{B} {}^{\beta}M)
\text{ is }\iota^M
  \\
     \label{eq:composition_iota}
\text{ the composed map }
&\mu^{M}\circ \iota^{M}
\text{ is the identity map of } M\,.
  \end{align}
\end{chunk}

\begin{lemma}
\label{le:dgretracts}
When $A\xra{\alpha}B\xra{\beta} C$ are morphisms of DG algebras, $L$ a right DG $A$-module, and $M$ a left DG 
$C$-module, there is an isomorphism of complexes
\[
 (L\otimes_A {}^{\beta\alpha}C)^{\beta}\otimes_B {}^{\beta}M 
 \cong(L\otimes_A {}^{\beta\alpha}M) \oplus\big(L\otimes_A {}^{\alpha}(\Coker(\beta)\otimes_B {}^{\beta}M)\big)\,.
\]
\end{lemma}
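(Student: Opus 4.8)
The plan is to derive the stated isomorphism from a single direct-sum decomposition of the complex $C^{\beta}\otimes_B{}^{\beta}M$, followed by associativity of tensor products. First I would note that $\Image(\beta)$ is a DG sub-$BB$-bimodule of ${}^{\beta}C^{\beta}$, so that $\Coker(\beta)=C/\Image(\beta)$ is a DG $BB$-bimodule and the canonical surjection $\pi\colon {}^{\beta}C^{\beta}\to\Coker(\beta)$ is a morphism of DG $BB$-bimodules. Viewing $B\xra{\ \beta\ }C^{\beta}\xra{\ \pi\ }\Coker(\beta)\to0$ as a right-exact sequence of right DG $B$-modules and applying the right-exact functor $-\otimes_B{}^{\beta}M$, one obtains an exact sequence of complexes whose first map, under the canonical identification $B\otimes_B{}^{\beta}M\cong M$, is $\iota^M$ by \eqref{eq:decomposition_iota}. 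In other words,
\[
M\xra{\ \iota^M\ }C^{\beta}\otimes_B{}^{\beta}M\xra{\ \pi\otimes M\ }\Coker(\beta)\otimes_B{}^{\beta}M\to0
\]
is exact, so $\Image(\iota^M)=\Ker(\pi\otimes M)$ and $\pi\otimes M$ is surjective.

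Next, by \eqref{eq:composition_iota} the map $\mu^M$ is a retraction of $\iota^M$, hence $C^{\beta}\otimes_B{}^{\beta}M=\Image(\iota^M)\oplus\Ker(\mu^M)$; together with the exactness above this shows that $\pi\otimes M$ restricts to an isomorphism $\Ker(\mu^M)\xra{\,\cong\,}\Coker(\beta)\otimes_B{}^{\beta}M$, and therefore that
\[
\theta\colon C^{\beta}\otimes_B{}^{\beta}M\lra M\oplus\big(\Coker(\beta)\otimes_B{}^{\beta}M\big)\,,\qquad x\longmapsto\big(\mu^M(x),\,(\pi\otimes M)(x)\big)\,,
\]
is an isomorphism of complexes. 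It then remains to identify the module structures. Equip $C^{\beta}\otimes_B{}^{\beta}M$ with the left $A$-action obtained from its left $C$-action by restriction along $\beta\alpha$. The $\mu^M$-component of $\theta$ is $A$-linear into ${}^{\beta\alpha}M$ because $\mu^M$ is $C$-linear; the $(\pi\otimes M)$-component is $A$-linear into ${}^{\alpha}\big(\Coker(\beta)\otimes_B{}^{\beta}M\big)$ because $\overline{\beta\alpha(a)\,c}\otimes m=\alpha(a)\cdot(\overline{c}\otimes m)$ holds in $\Coker(\beta)\otimes_B{}^{\beta}M$, where $B$ acts on $\Coker(\beta)$ through $\beta$. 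Since the two components land in the two summands, $\theta$ is an isomorphism of left DG $A$-modules
\[
{}^{\beta\alpha}\big(C^{\beta}\otimes_B{}^{\beta}M\big)\ \cong\ {}^{\beta\alpha}M\ \oplus\ {}^{\alpha}\big(\Coker(\beta)\otimes_B{}^{\beta}M\big)\,.
\]
Applying $L\otimes_A-$ and then the associativity isomorphism $(L\otimes_A{}^{\beta\alpha}C)^{\beta}\otimes_B{}^{\beta}M\cong L\otimes_A{}^{\beta\alpha}\big(C^{\beta}\otimes_B{}^{\beta}M\big)$ produces exactly the asserted decomposition.

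The argument is almost entirely a matter of bookkeeping, and the one thing to watch is that the two summands carry the $\beta\alpha$- and the $\alpha$-action, respectively, which is precisely why the single map $\theta$ is legitimately $A$-linear. The only genuinely substantive point is that $\beta$ need not be a monomorphism, so that the short exact sequence $0\to B\to C\to\Coker(\beta)\to0$ is not available and the splitting above must instead be built from the external identity $\mu^M\iota^M=\idmap_M$ of \eqref{eq:composition_iota}; the DG signs are all subsumed in the tensor-product differentials and present no obstacle, since $\beta$, $\pi$, $\iota^M$, and $\mu^M$ all have degree zero and commute strictly with the differentials.
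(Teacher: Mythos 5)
Your argument is correct and follows essentially the same route as the paper: apply $-\otimes_B{}^{\beta}M$ to the right-exact sequence $B\xra{\beta}C\to\Coker(\beta)\to0$, use the identity $\mu^M\iota^M=\idmap$ from \eqref{eq:composition_iota} to split off ${}^{\beta\alpha}M$, keep track of the $\beta\alpha$- versus $\alpha$-actions on the two summands, and finish with $L\otimes_A-$ and associativity. Your explicit construction of the splitting map $\theta=(\mu^M,\pi\otimes M)$ and the remark that $\beta$ need not be injective are just more detailed renderings of the same steps the paper carries out.
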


\begin{proof}
Consider the exact sequence of DG $BB$-bimodules:
\[
B \xra{\ \beta\ } C \lra \Coker(\beta) \lra 0\,.
\]
Applying $(? \otimes_{B}{}^{\beta}M)$ to it, in view of \eqref{eq:decomposition_iota} we get a sequence of left DG $B$-modules
\[
0\lra {}^{\beta}M\xra{\ \iota^M\ } {}^{\beta}(C^{\beta}\otimes_{B}{}^{\beta}M) \lra \Coker(\beta) \otimes_{B}{}^{\beta}M\lra 0\,.
\]
Its exactness is clear except  at $M$, and \eqref{eq:composition_iota} shows that $\iota^M$ is a split monomorphism. 
Thus, by restriction along $\alpha$, one gets an isomorphism of left DG $A$-modules
\[
{}^{\beta\alpha}(C^{\beta}\otimes_{B} {}^{\beta}M) \cong {}^{\beta\alpha}M \oplus {}^{\alpha}(\Coker(\beta) \otimes_{B}{}^{\beta}M)\,.
\]
The desired result is obtained by applying $(L\otimes_{A}?)$, then invoking the canonical isomorphism 
$L\otimes_A {}^{\beta\alpha}(C^{\beta}\otimes_B {}^{\beta}M)\cong (L\otimes_A {}^{\beta\alpha}C)^{\beta}\otimes_B {}^{\beta}M$.  
\end{proof}

\begin{chunk}
\label{ch:product-quism}
Let $\beta\colon B\to C$ be a morphism of DG algebras and $M$ a left DG $C$-module.  

When $\beta$ is a quasi-isomorphism and either $C^{\beta}$ or ${}^{\beta}M$ is semiflat, the morphisms of left 
DG modules $\iota^{M}$ and $\mu^{M}$, defined in \eqref{eq:product-retract} are quasi-isomorphisms.

Indeed,  $\beta\otimes_{B}{}^{\beta}M$ is a quasi-isomorphism by \ref{ch:semiflat}, so \eqref{eq:decomposition_iota} shows that
$\iota^{M}$ is a quasi-isomorphism, and then \eqref{eq:composition_iota} implies that so is $\mu^{M}$.
\end{chunk}

\begin{proposition}
\label{pr:dgretracts}
Let $A\xra{\alpha}B\xra{\beta} C$ be morphisms of DG algebras, $L$ a right DG  $C$-module, and $M$ a 
semiflat left DG $C$-module such that ${}^{\beta}M$ is semiflat. 

If $\beta\alpha$ is a quasi-isomorphism and the DG module $L^{\beta\alpha}$ or ${}^{\beta\alpha}C$ is semiflat, 
then there is a quasi-isomorphism of complexes
\[
L^{\beta}\otimes_{B}{}^{\beta}M\simeq\big(L\otimes_{C}M)\oplus(L^{\beta\alpha}\otimes_{A}{}^{\alpha}(\Coker(\beta)\otimes_{B}{}^{\beta}M)\big)\,.
\]
\end{proposition}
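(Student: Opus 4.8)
The plan is to apply Lemma~\ref{le:dgretracts} to the restriction $L^{\beta\alpha}$ of $L$ along $\beta\alpha$, and then to replace the summand $L^{\beta\alpha}\otimes_{A}{}^{\beta\alpha}M$ that this produces by $L\otimes_{C}M$. To set this up, I would first record the right-module analogue of \ref{ch:product-quism}: since $\beta\alpha$ is a quasi-isomorphism and at least one of $L^{\beta\alpha}$, ${}^{\beta\alpha}C$ is semiflat, the multiplication map $L^{\beta\alpha}\otimes_{A}{}^{\beta\alpha}C\to L$ is a quasi-isomorphism of right DG $C$-modules, hence of right DG $B$-modules after restriction along $\beta$. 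Tensoring it over $B$ with the semiflat module ${}^{\beta}M$ gives a quasi-isomorphism
\[
(L^{\beta\alpha}\otimes_{A}{}^{\beta\alpha}C)^{\beta}\otimes_{B}{}^{\beta}M\xra{\ \simeq\ }L^{\beta}\otimes_{B}{}^{\beta}M\,,
\]
and Lemma~\ref{le:dgretracts}, applied with the right DG $A$-module $L^{\beta\alpha}$ and the left DG $C$-module $M$, identifies the source with $(L^{\beta\alpha}\otimes_{A}{}^{\beta\alpha}M)\oplus\big(L^{\beta\alpha}\otimes_{A}{}^{\alpha}(\Coker(\beta)\otimes_{B}{}^{\beta}M)\big)$. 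As the second summand here already matches the statement, it remains to produce a quasi-isomorphism $L^{\beta\alpha}\otimes_{A}{}^{\beta\alpha}M\simeq L\otimes_{C}M$.

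For this---the heart of the matter---I would fix a semifree resolution $\varepsilon\colon F\xra{\ \simeq\ }{}^{\beta\alpha}M$ over $A$. Since $F$ is semiflat, tensoring the quasi-isomorphism $\beta\alpha$ with $F$ over $A$ shows that $f\mapsto 1\otimes f$ defines a quasi-isomorphism $F\to {}^{\beta\alpha}(C\otimes_{A}F)$; as the composite $F\to C\otimes_{A}F\to M$ is $\varepsilon$, the map $C\otimes_{A}F\to M$, $c\otimes f\mapsto c\varepsilon(f)$, is then a quasi-isomorphism, and since $C\otimes_{A}F$ is semifree over $C$ it is a semifree resolution of $M$. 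Because $C\otimes_{A}F$ and $M$ are both semiflat over $C$, applying $L\otimes_{C}?$ yields $L\otimes_{C}(C\otimes_{A}F)\simeq L\otimes_{C}M$, while the left-hand side is canonically $L^{\beta\alpha}\otimes_{A}F$. Finally I would check that $L^{\beta\alpha}\otimes_{A}\varepsilon$ is a quasi-isomorphism: this is clear when $L^{\beta\alpha}$ is semiflat, and when instead ${}^{\beta\alpha}C$ is semiflat over $A$ one first notes that ${}^{\beta\alpha}M$ is then semiflat over $A$ as well---via the isomorphism $X\otimes_{A}{}^{\beta\alpha}M\cong(X\otimes_{A}{}^{\beta\alpha}C)\otimes_{C}M$ and semiflatness of $M$ over $C$---so that $\varepsilon$ is a quasi-isomorphism of semiflat $A$-modules and stays one after tensoring with $L^{\beta\alpha}$. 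Splicing these quasi-isomorphisms together gives the claim.

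I expect the obstacle to lie entirely in this last step: one has to keep track of several simultaneous restrictions of scalars together with the left/right DG module and bimodule structures they carry, and the argument genuinely branches on which of $L^{\beta\alpha}$, ${}^{\beta\alpha}C$ is assumed semiflat---with the auxiliary point that semiflatness of ${}^{\beta\alpha}C$ over $A$ propagates to ${}^{\beta\alpha}M$. By contrast, passing from $L$ to $L^{\beta\alpha}\otimes_{A}{}^{\beta\alpha}C$ and invoking Lemma~\ref{le:dgretracts} is formal once \ref{ch:product-quism} is in hand.
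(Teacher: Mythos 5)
Your argument is correct, and its skeleton is the same as the paper's: invoke the right-module analogue of \ref{ch:product-quism} to get the quasi-isomorphism $L^{\beta\alpha}\otimes_{A}{}^{\beta\alpha}C\to L$, tensor it over $B$ with the semiflat module ${}^{\beta}M$, and then apply Lemma~\ref{le:dgretracts} to $L^{\beta\alpha}$. The only divergence is in the step you single out as the heart of the matter, namely $L^{\beta\alpha}\otimes_{A}{}^{\beta\alpha}M\simeq L\otimes_{C}M$. The paper dispatches this in one line: the canonical associativity isomorphism $L^{\beta\alpha}\otimes_{A}{}^{\beta\alpha}M\cong(L^{\beta\alpha}\otimes_{A}{}^{\beta\alpha}C)\otimes_{C}M$, followed by tensoring the already-established quasi-isomorphism $L^{\beta\alpha}\otimes_{A}{}^{\beta\alpha}C\to L$ over $C$ with $M$ --- which is a quasi-isomorphism precisely because $M$ is assumed semiflat over $C$. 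This is where that hypothesis earns its keep, and it makes the case split on $L^{\beta\alpha}$ versus ${}^{\beta\alpha}C$ unnecessary at this stage (the split is already absorbed into \ref{ch:product-quism}). Your detour through a semifree resolution $F\to{}^{\beta\alpha}M$ over $A$, its base change $C\otimes_{A}F\to M$, and the semiflatness transfer of \ref{ch:flat-maps} is valid but does more work than needed; the observation you flag as an "auxiliary point" is exactly the second assertion of \ref{ch:flat-maps}, so you could cite it rather than reprove it. Either way, the conclusion stands.
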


\begin{proof}
By (the analogue for right DG modules of) \ref{ch:product-quism} the map
\[
L^{\beta\alpha}\otimes_{A} {}^{\beta\alpha}C\lra  L
\]
is a  quasi-isomorphism of right DG $C$-modules and thus also one of right DG  $B$-modules. This, and the hypotheses on $M$, give the first and the last quasi-isomorphisms of complexes in the following string
\begin{alignat*}{2}
L^{\beta}\otimes_{B}{}^{\beta}M &\xla{\simeq}& 
       	  & (L^{\beta\alpha}\otimes_{A} {}^{\beta\alpha}C)^{\beta} \otimes_{B} {}^{\beta}M \\
&\cong& & (L^{\beta\alpha}\otimes_{A}{}^{\beta\alpha}M) \oplus (L^{\beta\alpha}\otimes_{A} {}^{\alpha}(\Coker(\beta)\otimes_{B} {}^{\beta}M)) \\
&\cong& & ((L^{\beta\alpha}\otimes_{A} {}^{\beta\alpha}C)\otimes_{C} M) \oplus (L^{\beta\alpha}\otimes_{A} {}^{\alpha}(\Coker(\beta)\otimes_{B} {}^{\beta}M))\\
&\xra{\simeq}& & (L\otimes_{C} M) \oplus (L^{\beta\alpha}\otimes_{A} {}^{\alpha}(\Coker(\beta)\otimes_{B} {}^{\beta}M))
\end{alignat*}
The second one is Lemma~\ref{le:dgretracts}, applied to $L^{\beta\alpha}$; the third one is canonical.
\end{proof}

Here is a first application of  Proposition~\ref{pr:dgretracts}. Note that the DG algebras in the statement are graded-commutative.

\begin{theorem}
\label{th:cdgretracts}
Let $ B\xra{\beta} C\xra{\varepsilon} k$ be morphisms of graded-commutative DG algebras, where $k$ is a field, and let $L$ be a DG $C$-module.  

If there exists a morphism of DG algebras $\alpha\colon A\to B$, such that $\beta\alpha\colon A\to C$ is a quasi-isomorphism, 
then there is an isomorphism of graded $k$-vector spaces:
\[
\Tor {}B{L^{\beta}}{{}^{\varepsilon\beta}k} \cong \Tor{}CL{{}^{\varepsilon}k} \otimes_{k} \Tor {}B{C^{\beta}}{{}^{\varepsilon\beta}k} \,.
\]
\end{theorem}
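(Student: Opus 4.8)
The plan is to reduce the assertion to a quasi-isomorphism of complexes of $k$-vector spaces, and then to pass to homology using the Künneth formula over the field $k$, which carries no correction terms. To set things up, fix a semiflat resolution $F\xra{\simeq}{}^{\varepsilon\beta}k$ of left DG $B$-modules; one may take $F$ to be a DG $Bk$-bimodule, so that $C^{\beta}\otimes_{B}F$ acquires a compatible right $k$-action. Since $F$ is semiflat over $B$, the complex $L^{\beta}\otimes_{B}F$ computes $\Tor{}{B}{L^{\beta}}{{}^{\varepsilon\beta}k}$. Put $\wt F=C^{\beta}\otimes_{B}F$; it is a semiflat left DG $C$-module, a DG $Ck$-bimodule, and — being the base change of a semiflat $B$-resolution of ${}^{\varepsilon\beta}k$ — satisfies $\hh{\wt F}\cong\Tor{}{B}{C^{\beta}}{{}^{\varepsilon\beta}k}$; denote this graded $k$-vector space by $W$. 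The associativity isomorphism $L^{\beta}\otimes_{B}F\cong L\otimes_{C}(C^{\beta}\otimes_{B}F)=L\otimes_{C}\wt F$ then reduces the theorem to showing $\hh{L\otimes_{C}\wt F}\cong\Tor{}{C}{L}{{}^{\varepsilon}k}\otimes_{k}W$.

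The crux — and the only place where the morphism $\alpha$ is needed — is the claim that, as a DG $C$-module, $\wt F$ is quasi-isomorphic to the trivial module ${}^{\varepsilon}k\otimes_{k}W$, i.e.\ to $W$ regarded as a complex with zero differential on which $C$ acts through $\varepsilon$. This has two ingredients. The first is soft: because $\beta\alpha$ is a quasi-isomorphism, $\hh{\beta}\colon\hh{B}\to\hh{C}$ is surjective; and on $W=\Tor{}{B}{C^{\beta}}{{}^{\varepsilon\beta}k}$ the $\hh{B}$-action induced from the factor $C^{\beta}$ (through $\hh{\beta}$) coincides with the one induced from the factor ${}^{\varepsilon\beta}k$ (through $\hh{\varepsilon\beta}$), so $\Ker(\varepsilon\colon\hh{C}\to k)$ annihilates $W$ and ${}^{\varepsilon}k\otimes_{k}W$ is well defined as a trivial $C$-module. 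The second ingredient, the formality of $\wt F$ as a DG $C$-module, is where the real work lies: it does not follow from $W$ being a trivial homology module, and I expect it to be the main obstacle. I would prove it by using the quasi-isomorphism $\beta\alpha$ to compare resolutions of ${}^{\varepsilon}k$ over $B$ and over $C$, the structural splitting being supplied by Lemma~\ref{le:dgretracts} and Proposition~\ref{pr:dgretracts} along $A\xra{\alpha}B\xra{\beta}C$. This step is substantial; in the special case $L={}^{\varepsilon}k$ it is already equivalent to Herzog's multiplicativity of Poincar\'e series over retracts.

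Granting the claim, the argument finishes quickly. Fix a semiflat resolution $M\xra{\simeq}{}^{\varepsilon}k$ of DG $C$-modules, taken as a $Ck$-bimodule. Since $k$ is a field, $-\otimes_{k}W$ is exact, so $M\otimes_{k}W$ is a semiflat $C$-resolution of ${}^{\varepsilon}k\otimes_{k}W$; together with the claim this shows that $\wt F$ and $M\otimes_{k}W$ become isomorphic in the derived category of DG $C$-modules, and, both being semiflat, $L\otimes_{C}\wt F$ and $L\otimes_{C}(M\otimes_{k}W)=(L\otimes_{C}M)\otimes_{k}W$ are therefore quasi-isomorphic. Taking homology, and using the Künneth formula over $k$ together with the fact that $L\otimes_{C}M$ computes $\Tor{}{C}{L}{{}^{\varepsilon}k}$, yields
\[
\Tor{}{B}{L^{\beta}}{{}^{\varepsilon\beta}k}\cong\hh{L\otimes_{C}\wt F}\cong\hh{L\otimes_{C}M}\otimes_{k}W\cong\Tor{}{C}{L}{{}^{\varepsilon}k}\otimes_{k}\Tor{}{B}{C^{\beta}}{{}^{\varepsilon\beta}k}\,,
\]
which is the claimed isomorphism of graded $k$-vector spaces.
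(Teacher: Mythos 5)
Your reduction is sound as far as it goes, but the proof has a genuine gap at exactly the point you flag as ``the main obstacle'': the formality of $\wt F=C^{\beta}\otimes_{B}F$ as a DG $C$-module is asserted, not proved, and the tools you point to do not obviously supply it. Lemma~\ref{le:dgretracts} and Proposition~\ref{pr:dgretracts} produce splittings only after restriction of scalars along $\alpha$ --- the section $\mu^{M}$ of $\iota^{M}$ is $C$-linear but $\iota^{M}$ itself is only $B$-linear, so the resulting decomposition $C^{\beta}\otimes_{B}{}^{\beta}M\cong{}^{\beta}M\oplus\big(\Coker(\beta)\otimes_{B}{}^{\beta}M\big)$ lives in DG $B$- (or $A$-) modules, never in DG $C$-modules. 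Yet your final step requires a $C$-linear identification of $\wt F$ with ${}^{\varepsilon}k\otimes_{k}W$ in order to apply $L\otimes_{C}(-)$. You correctly note that triviality of the $\hh{C}$-action on $W$ is not enough (e.g.\ for $C=k[y]/(y^{2})$ the complex $C\xra{y}C$ has trivial homology module but is not formal, as its Tor against $k$ is bounded), so something substantive and $C$-linear must be produced, and nothing in your sketch produces it. As written, the argument is circular in spirit: the formality claim is at least as strong as the theorem itself (it implies the conclusion for every $L$ simultaneously), so deferring it defers the entire content.

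For comparison, the paper's proof avoids any formality statement. It resolves ${}^{\varepsilon}k$ over $C$ (not over $B$), takes $M\xra{\simeq}{}^{\varepsilon}k$ semiflat over $C$, and applies Proposition~\ref{pr:dgretracts} to split the \emph{complex} $L^{\beta}\otimes_{B}{}^{\beta}M$ as $(L\otimes_{C}M)\oplus\big(L^{\beta\alpha}\otimes_{A}{}^{\alpha}(\Coker(\beta)\otimes_{B}{}^{\beta}M)\big)$. The correction term is then computed directly: semiflatness of $\Coker(\beta)$ (arranged via \ref{ch:resolutions-map}) gives $\Coker(\beta)\otimes_{B}{}^{\beta}M\simeq\Coker(\beta)\otimes_{B}{}^{\varepsilon\beta}k$, which is a complex of $k$-vector spaces on which $A$ acts through $\varepsilon\beta\alpha$, so the K\"unneth formula and the isomorphism $\Tor{}B{C^{\beta}}{{}^{\varepsilon\beta}k}\cong k\oplus\Tor{}B{\Coker(\beta)}{{}^{\varepsilon\beta}k}$ reassemble the answer. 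If you want to salvage your outline, the most direct repair is to abandon the formality of $\wt F$ and instead carry out this splitting of the complex $L^{\beta}\otimes_{B}{}^{\beta}M$; all the quasi-isomorphisms needed are then only of complexes, which is all the conclusion requires.
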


\begin{proof}
Referring to \ref{ch:resolutions-map}, form a commutative diagram of DG algebras
\[
\xymatrix{
                    &\wt B \ar@{->>}[d]^{\simeq} \ar@{>->}[r]^{\wt \beta}  & \wt C \ar@{->>}[d]^{\simeq} \\
A\ar@{->}[r]^{\alpha} \ar@{>->}[ur]^{\wt \alpha} &B \ar@{->}[r]^{\beta}  & C  \ar@{->}[r]^{\varepsilon} & k
  }
\]
where $A\xra{\wt\alpha} \wt B \overset{\simeq}{\twoheadrightarrow} B$ is a semiflat DG algebra resolution of $\alpha$ and $\wt B\xra{\wt\beta} \wt C\overset{\simeq}{\twoheadrightarrow} C$ is one of the composed morphism $\wt B\to B\xra{\beta}C$.  In view of \ref{ch:tor2}, it suffices to establish the desired isomorphism for the morphism of DG algebras $\wt B\to \wt C\to k$. Thus, replacing $B\to C\to k$ by $\wt B\to \wt C\to k$ we may assume that  ${}^{\alpha} B$ and ${}^{\beta}C$ are semiflat.  Moreover, replacing $L$ with a resolution, we may further assume that $L$ is semiflat. Note that ${}^{\beta\alpha}C$ and $L^{\beta\alpha}$ are semiflat, by \ref{ch:flat-maps}.

One has an exact sequence of DG $B$-modules
\[
0\lra B \xra{\ \beta\ } C \lra \Coker(\beta) \lra 0\,.
\]
Applying $\Tor{}B{?}{{}^{\varepsilon\beta}k}$ one gets an isomorphism of graded $k$-vector spaces
\begin{equation}
\label{eq:decomposition}
k\oplus \Tor{}B{\Coker(\beta)}{{}^{\varepsilon\beta}k} \cong \Tor{}B{C^{\beta}}{{}^{\varepsilon\beta}k}\,.
\end{equation}

Let $M\xra{\simeq}{}^{\varepsilon}k$ be a semiflat resolution over $C$. Since $\Coker(\beta)$ is semiflat, by construction, it induces a quasi-isomorphism of DG $B$-modules
\[
\Coker(\beta)\otimes_{B} {}^{\beta}M\xra{\ \simeq\ } \Coker(\beta)\otimes_{B}{}^{\varepsilon\beta}k\,.
\]
By restriction of scalars, this is also a morphism of DG $A$-modules. Since $L^{\beta\alpha}$ is semiflat, the preceding quasi-isomorphism 
induces the one below:
\begin{align*}
L^{\beta\alpha}\otimes_A {}^{\alpha}(\Coker(\beta)\otimes_{B}{}^{\beta}M) 
	& \simeq L^{\beta\alpha}\otimes_A {}^{\alpha}(\Coker(\beta)\otimes_{B}{}^{\varepsilon\beta}k) \\
	& \cong (L^{\beta\alpha}\otimes_{A} {}^{\varepsilon\beta\alpha}k)\otimes_{k} (\Coker(\beta)\otimes_{B}{}^{\varepsilon\beta}k) 
\end{align*}
The isomorphism holds because the action of $B$ on $\Coker(\beta)\otimes_{B}k$ through $\Coker(\beta)$ coincides with is action through $k$, and hence so do the induced actions of $A$.  

The quasi-isomorphisms above and the K\"unneth formula yield the first one of the following isomorphisms of graded $k$-vector spaces:
\begin{align*}
\hh{L^{\beta\alpha}\otimes_A {}^{\alpha}(\Coker(\beta)\otimes_{B}{}^{\beta}M)} 
	&\cong \Tor{}A{L^{\beta\alpha}}{{}^{\varepsilon\beta\alpha}k} \otimes_{k} \Tor {}B{\Coker(\beta)}{k^{\varepsilon\beta}}\\
	&\cong \Tor{}CL{{}^{\varepsilon}k} \otimes_{k} \Tor {}B{\Coker(\beta)}{k^{\varepsilon\beta}}
\end{align*}
The second one holds by \ref{ch:tor2}, since  $\beta\alpha$ is a quasi-isomorphism. 

The last display justifies the third isomorphism in the next string:
\begin{align*}
\Tor{}B{L^{\beta}}{k^{\varepsilon\beta}} &\cong \hh{L^{\beta}\otimes_{B}{}^{\beta}M}\\
	&\cong \hh{L\otimes_{C}M} \oplus \hh{L^{\beta\alpha}\otimes_A {}^{\alpha}(\Coker(\beta)\otimes_{B}{}^{\beta}M)} \\
	&\cong \Tor{}CL{{}^{\varepsilon}k}\oplus \big(\Tor{}CL{{}^{\varepsilon}k} \otimes_{k}\Tor{}B{\Coker(\beta)}{k^{\varepsilon\beta}}\big) \\
	&\cong \Tor{}CL{{}^{\varepsilon}k}\otimes_{k}\big(k\oplus \Tor{}{B}{\Coker(\beta)}{k^{\varepsilon\beta}}\big) \\
	&\cong \Tor{}CL{{}^{\varepsilon}k}\otimes_{k}\Tor{} B{C^{\beta}}{k^{\varepsilon\beta}} 
\end{align*}
Proposition~\ref{pr:dgretracts} gives the second isomorphism, and formula \eqref{eq:decomposition} the last one.
\end{proof}

\section{Trivial extensions}

\emph{For the rest of the article all DG algebras are assumed to be graded-commutative}. 
 
  \medskip

Let $A$ be a DG algebra and $W$ a DG $A$-module.

The \emph{trivial extension} $A\ltimes W$ is the DG algebra with underlying complex  $A\oplus W$
and product given by $(a,w)(a',w')=(aa',aw'+(-1)^{|w||a'|}a'w)$. Note that the canonical maps 
$A\to A\ltimes W\to A$ are morphisms of DG algebras.

\begin{theorem}
\label{thm:tor}
Let  $A$ be a DG algebra, and let $M$ and $N$ be DG $A$-modules.

Let $k$ be a field, $W$ a DG $k$-module, and $\varepsilon\colon A\to k$ a morphism of DG algebras.

Set $B=A\ltimes {}^{\varepsilon}W$ and let $\beta\colon B\to A$ be the canonical surjection.

There is then a natural isomorphism of graded $\hh A$-modules:
\[
\Tor {}B{M^{\beta}}{{}^{\beta}N} 
\cong \Tor {}AMN \oplus \big(\Tor {}AM{{}^{\varepsilon}k} \otimes_{k} (\shift \hh W) \otimes_{k}\Tor {}B{k^{\varepsilon\beta}}{{}^{\beta}N}\big)\,.
\]
  \end{theorem}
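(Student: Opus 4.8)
The plan is to realize $\Tor{}B{M^\beta}{{}^\beta N}$ as the homology of the splitting produced by Proposition~\ref{pr:dgretracts}, applied to the canonical retraction of $B$ onto $A$. Write $\sigma\colon A\to B$ for the canonical injection, so that $\beta\sigma=\idmap_A$; the morphisms $A\xra{\sigma}B\xra{\beta}A$ are the input, but I would first resolve them exactly as in the proof of Theorem~\ref{th:cdgretracts}: using \ref{ch:resolutions-map}, form a strictly commutative diagram of DG algebras $A\xra{\wt\sigma}\wt B\xra{\wt\beta}\wt A$ together with surjective quasi-isomorphisms $\wt B\to B$ and $\wt A\to A$, where $A\xra{\wt\sigma}\wt B\to B$ is a semiflat DG algebra resolution of $\sigma$ and $\wt\beta$ is a cofibration resolving $\wt B\to B\xra{\beta}A$. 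Then $\wt B$ is semiflat over $A$, $\wt A$ is semiflat over $\wt B$, and $\wt\beta\wt\sigma$ lifts $\idmap_A$. After replacing $M$ by a semiflat resolution over $A$, one uses \ref{ch:flat-maps} to check that Proposition~\ref{pr:dgretracts} applies to $A\xra{\wt\sigma}\wt B\xra{\wt\beta}\wt A$ with $L=M$ (viewed over $\wt A$ via the resolution) and with $N'$ a semiflat resolution of $N$ over $\wt A$, and \ref{ch:tor2} to identify the relevant homology groups with $\operatorname{Tor}$ over $B$ and over $A$. Since $\wt\beta\wt\sigma$ lifts the identity, this gives a natural isomorphism
\[
\Tor{}B{M^\beta}{{}^\beta N}\cong\Tor{}AMN\ \oplus\ \hh{M\otimes_A{}^{\wt\sigma}\!\big(\Coker(\wt\beta)\otimes_{\wt B}{}^{\wt\beta}N'\big)}\,.
\]

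The heart of the argument is to identify $\Coker(\wt\beta)$ as a DG $\wt B$-module. Because $\wt\beta$ is a cofibration, the canonical surjection $\cone(\wt\beta)\to\Coker(\wt\beta)$ is a quasi-isomorphism of DG $\wt B$-modules; because the commutative square with horizontal arrows $\wt\beta,\beta$ and vertical arrows $\wt B\to B$, $\wt A\to A$ has both vertical maps quasi-isomorphisms, $\cone(\wt\beta)$ is quasi-isomorphic, as a DG $\wt B$-module, to $\cone(\beta)$ regarded as a DG $\wt B$-module via $\wt B\to B$; and $\beta\colon B\to{}^\beta A$ is a surjection of DG $B$-modules whose kernel is the square-zero ideal of $B$, which equals ${}^{\varepsilon\beta}W$, so $\cone(\beta)\simeq\shift({}^{\varepsilon\beta}W)$. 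Writing $\wt\varepsilon\colon\wt B\to k$ for the composite of $\wt B\to B$ with $\varepsilon\beta$, one obtains $\Coker(\wt\beta)\simeq{}^{\wt\varepsilon}(\shift W)$ as DG $\wt B$-modules. I expect this cone and cokernel bookkeeping---keeping the $\wt B$-action straight through the resolution---to be the main obstacle.

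Granting that, and using that ${}^{\wt\beta}N'$ is semiflat over $\wt B$, one gets $\Coker(\wt\beta)\otimes_{\wt B}{}^{\wt\beta}N'\simeq{}^{\wt\varepsilon}\!\big((\shift W)\otimes_k(k^{\wt\varepsilon}\otimes_{\wt B}{}^{\wt\beta}N')\big)$, a complex of $k$-vector spaces carrying the $\wt B$-action through $\wt\varepsilon$. Restricting along $\wt\sigma$ converts this into the $A$-action through $\wt\varepsilon\wt\sigma=\varepsilon\beta\sigma=\varepsilon$, so tensoring with $M$ over $A$ produces $(M\otimes_A{}^\varepsilon k)\otimes_k(\shift W)\otimes_k(k^{\wt\varepsilon}\otimes_{\wt B}{}^{\wt\beta}N')$. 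Passing to homology and invoking the K\"unneth formula over the field $k$---together with $M$ being semiflat over $A$, and \ref{ch:tor2} along $\wt B\to B$ to rewrite $\hh{k^{\wt\varepsilon}\otimes_{\wt B}{}^{\wt\beta}N'}\cong\Tor{}B{k^{\varepsilon\beta}}{{}^\beta N}$---identifies the second summand with $\Tor{}AM{{}^\varepsilon k}\otimes_k(\shift\hh W)\otimes_k\Tor{}B{k^{\varepsilon\beta}}{{}^\beta N}$, which is the asserted formula. Naturality in $M$ and $N$, and $\hh A$-linearity, follow since every map used is a morphism of DG modules over the pertinent DG algebra, compatible with the structure morphisms, exactly as in the proof of Theorem~\ref{th:cdgretracts}.
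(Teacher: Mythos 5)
Your proposal is correct and follows essentially the same route as the paper's proof: resolve the retraction $A\to B\to A$ via \ref{ch:resolutions-map}, apply Proposition~\ref{pr:dgretracts} to split off $\Tor{}AMN$, identify $\Coker(\wt\beta)$ with $\shift({}^{\varepsilon\beta}W)$ by the three cone quasi-isomorphisms of \ref{ch:cone-ker}, and finish with semiflatness and the K\"unneth formula. The only differences are notational (your $\wt A$ is the paper's $C$, and you resolve $M$ over $A$ rather than over the resolved target), and your identification of the "main obstacle" matches exactly where the paper expends its effort.
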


\begin{corollary}
\label{co:tor-nonvanishing}
When $\HH iV\ne0$ holds for some $i\ne -1$, the condition 
  \[
\Tor{}AM{{}^{\varepsilon}k}\ne0\ne\Tor{}A{k^{\varepsilon}}N
  \]
implies $\Tor {i}B{M^{\beta}}{{}^{\beta}N}\ne 0$ for infinitely many integers $i$.
\end{corollary}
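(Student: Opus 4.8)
The plan is to apply Theorem~\ref{thm:tor} twice: once to the given pair $(M,N)$, and once with the $A$-module ${}^{\varepsilon}k$ playing the role of $M$. Throughout, write $W$ for the DG $k$-module (denoted $V$ in the statement above) and put $T:=\Tor{}B{k^{\varepsilon\beta}}{{}^{\beta}N}$. The first application of Theorem~\ref{thm:tor} gives an isomorphism of graded $k$-vector spaces
\[
\Tor{}B{M^{\beta}}{{}^{\beta}N}\cong\Tor{}AMN\oplus\big(\Tor{}AM{{}^{\varepsilon}k}\otimes_{k}(\shift\hh W)\otimes_{k}T\big)\,,
\]
so it suffices to prove that the second summand is nonzero in infinitely many degrees. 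Since $\Tor{}AM{{}^{\varepsilon}k}\ne0$ by hypothesis and $\shift\hh W\ne0$ because $\HH iW\ne0$, the graded $k$-vector space $X:=\Tor{}AM{{}^{\varepsilon}k}\otimes_{k}\shift\hh W$ is nonzero; fixing a degree $c$ with $X_{c}\ne0$, one has $X_{c}\otimes_{k}T_{r}$ as a direct summand of $(X\otimes_{k}T)_{c+r}$ for every $r$, so if $T$ is nonzero in infinitely many degrees then so is $X\otimes_{k}T$, and hence so is $\Tor{}B{M^{\beta}}{{}^{\beta}N}$. Thus the whole problem reduces to showing that $T$ is nonzero in infinitely many degrees.

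For that I would invoke Theorem~\ref{thm:tor} a second time, with ${}^{\varepsilon}k$ as $M$; note that ${}^{\varepsilon}k$ restricted along $\beta$ is precisely $k^{\varepsilon\beta}$, so this really computes the same $T$. The output is the self-referential isomorphism of graded $k$-vector spaces
\[
T\cong\Tor{}A{k^{\varepsilon}}N\oplus\big(\Tor{}A{k^{\varepsilon}}{{}^{\varepsilon}k}\otimes_{k}(\shift\hh W)\otimes_{k}T\big)\,.
\]
Two facts then drive the argument. First, $\Tor{}A{k^{\varepsilon}}N\ne0$ by hypothesis, so $T\ne0$; fix a degree $n$ with $T_{n}\ne0$. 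Second, $\Tor 0A{k^{\varepsilon}}{{}^{\varepsilon}k}\ne0$ (indeed it is isomorphic to $k$, since $\varepsilon$ is onto), and $(\shift\hh W)_{i+1}=\HH iW\ne0$; hence the degree-$m$ component of $\Tor{}A{k^{\varepsilon}}{{}^{\varepsilon}k}\otimes_{k}(\shift\hh W)\otimes_{k}T$ has the piece indexed by $(p,q,r)=(0,\,i+1,\,m-i-1)$ as a direct summand, and that piece is nonzero whenever $T_{m-i-1}$ is nonzero. Reading this off the displayed isomorphism gives $T_{m}\ne0$ whenever $T_{m-i-1}\ne0$, and iterating from $n$ yields $T_{n+j(i+1)}\ne0$ for all integers $j\ges0$. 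Since $i\ne-1$, these degrees are pairwise distinct, so $T$---and with it $\Tor{}B{M^{\beta}}{{}^{\beta}N}$---is nonzero in infinitely many degrees, which is the assertion.

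Most of this is routine: restriction of scalars along $\beta$, extracting degree-$m$ components of tensor products of graded $k$-vector spaces, and the elementary remark that passing to a direct summand or tensoring with a nonzero graded $k$-vector space cannot turn an infinite set of nonzero degrees into a finite one. The one step deserving care is the nonvanishing of $\Tor 0A{k^{\varepsilon}}{{}^{\varepsilon}k}$: this is what makes the shift contribute in the accessible degree $i+1$, rather than only in some larger degree of $\Tor{}Akk$, and so propagates nonvanishing along the arithmetic progression $\{\,n+j(i+1)\,\}_{j\ges0}$. It is also exactly here that the hypothesis $i\ne-1$ enters, since $i+1\ne0$ is what makes that progression infinite. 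Beyond organizing these observations, I foresee no essential obstacle.
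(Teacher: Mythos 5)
Your strategy is sound and, after the first reduction, genuinely different from the paper's. Both arguments begin identically: apply Theorem~\ref{thm:tor} to $(M,N)$ and reduce to showing that $T=\Tor{}B{k^{\varepsilon\beta}}{{}^{\beta}N}$ is nonzero in infinitely many degrees. The paper then applies the theorem once more to push the problem all the way down to $\Tor{}Bkk$ and finishes by contradiction: assuming $\sup\Tor{}Bkk$ and $\inf\Tor{}Bkk$ are both finite, it extracts numerical identities for $\sup$ and $\inf$ from the self-referential isomorphism \eqref{eq:tor4} and contradicts $\sup \Tor{}Akk\ges 0\ges\inf\Tor{}Akk$. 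You instead exploit the self-referential isomorphism for $T$ directly and constructively: the summand $\Tor 0Akk\otimes_k(\shift \hh W)_{i+1}\otimes_kT_{m-i-1}$ propagates nonvanishing along the arithmetic progression $n+j(i+1)$, which is infinite precisely because $i\ne-1$. This is a clean alternative; it exhibits explicit degrees of nonvanishing rather than ruling out boundedness, and it makes the role of the hypothesis $i\ne -1$ more transparent. (You correctly read the statement's $V$ as $W$.)

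The one point that is not adequately justified is $\Tor 0A{k^{\varepsilon}}{{}^{\varepsilon}k}\ne0$, on which your whole propagation step rests. You assert it is isomorphic to $k$ ``since $\varepsilon$ is onto,'' but surjectivity of $\varepsilon$ is not among the hypotheses of Theorem~\ref{thm:tor} --- the image of $\varepsilon$ is merely a subring of the field $k$ containing $1$ --- and surjectivity alone would not by itself compute a derived tensor product over a DG algebra. The fact you need is nevertheless true, and the paper proves exactly this in the course of establishing \eqref{eq:supA}: a semiflat DG algebra resolution $A\to\wt A\to k$ yields chain maps
\[
k\cong A\otimes_{A}k\lra \wt A\otimes_{A}k\lra k\otimes_{A}k\lra k
\]
whose composite is the identity, so $k$ is a retract of $\Tor 0Akk$ and in particular $\Tor 0Akk\ne0$. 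With that substitution in place of your parenthetical, your argument is complete.
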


In the proofs we use basic properties of mapping cones, which we recall next.

\begin{chunk}
\label{ch:cone-ker}
Let $\psi\colon S\to T$ be a morphism of DG $A$-modules.

The \emph{cone} of $\psi$ is the DG $A$-module $\cone(\psi)$, with $\nat{\cone(\psi)}=\shift \nat S\oplus \nat T$ 
and differential given by $(s,t) \mapsto (\partial^{\shift S}(s), \partial^{T}(t) + \psi(s))$.

If $\psi$ is injective, then there is a quasi-isomorphism of DG $A$-modules 
  \begin{equation}
    \label{eq:cone-coker}
\pi\colon \cone(\psi) \xra{\ \simeq } \Coker(\psi) \quad\text{given by}\quad (s,t) \mapsto t+\Image(\psi)\,.
   \end{equation}
Indeed, then $\pi$ is surjective with $\Ker(\iota)\cong\cone(\idmap^{S})$, and $\cone(\idmap^{S})$ is acyclic.

If $\psi$ is surjective, then there is a quasi-isomorphism of DG $A$-modules 
  \begin{equation}
    \label{eq:cone-ker}
\iota\colon \shift \Ker(\psi) \xra{\ \simeq } \cone(\psi) \quad\text{given by}\quad s\mapsto (s,0)\,.
      \end{equation}
Indeed, then $\iota$ is injective with $\Coker(\iota)\cong\cone(\idmap^{T})$, and $\cone(\idmap^{T})$ is acyclic.

If there is a commutative square of morphisms of DG $A$-modules
\[
\xymatrix{
S\ar@{->}[r]^{\psi} \ar@{->}[d]_{\sigma}^{\simeq} & T\ar@{->}[d]^{\tau}_{\simeq} \\
S'\ar@{->}[r]^{\psi'}   & T' 
}
\]
with $\sigma,\tau$ quasi-isomorphisms, then there is a quasi-isomorphism of DG $A$-modules 
  \begin{equation}
    \label{eq:cone-induced}
\underline\psi\colon \cone(\psi)\xra{\simeq} \cone(\psi') \quad\text{is given by}\quad (s,t) \mapsto (\sigma(s),\tau(t))\,.
      \end{equation}
Indeed, this follows from the Five-Lemma applied to the commutative diagram 
\[
\xymatrix{
0 \ar@{->}[r] &T\ar@{->}[r]\ar@{->}[d]_{\tau}^{\simeq} &\cone(\psi)\ar@{->}[r]\ar@{->}[d]_{\underline\psi} &\shift S\ar@{->}[r] \ar@{->}[d]_{\shift\sigma}^{\simeq} &0
  \\
0 \ar@{->}[r] & T' \ar@{->}[r] &\cone(\psi') \ar@{->}[r]  & \shift S' \ar@{->}[r]  &0
}
\]
\end{chunk}

\stepcounter{theorem}

 \begin{proof}[Proof of Theorem~\emph{\ref{thm:tor}}]
By using \ref{ch:resolutions-map}, we construct a diagram of DG algebras
\[
\xymatrix{
 A\ar@{->}[r]^{\alpha} \ar@{>->}[dr]_{\iota} &\wt B \ar@{->>}[d]^{\simeq}_{\wt\iota} \ar@{>->}[r]^{\wt\beta}  & C \ar@{->>}[d]_{\simeq}^{\gamma}\\
                   &B \ar@{->>}[r]^{\beta}  & A \ar@{->}[r]^{\varepsilon} & k }
\]
where $\iota$ is canonical, $\wt\iota\alpha$ is a semiflat resolution of $\iota$, and 
$\gamma\wt\beta$ is one of $\beta\wt\iota$.

Let $\wt M\xra{\simeq}M^{\gamma}$ and $\wt N\xra{\simeq}{}^{\gamma}N$ be semiflat resolutions over $C$.
In view of \ref{ch:flat-maps}, the maps $\wt M^{\wt\beta}\xra{\simeq}M^{\gamma\wt\beta}=M^{\beta\wt\iota}$ 
and ${}^{\wt\beta}\wt N\xra{\simeq}{}^{\gamma\wt\beta}N={}^{\beta\wt\iota}N$ are semiflat resolutions over $\wt B$. 
They explain the first isomorphisms below, and \ref{ch:tor2} gives the second ones:
  \begin{gather*}
\hh{\wt M^{\wt\beta}\otimes_{\wt B}{}^{\wt\beta}\wt N}
\cong\Tor{}{\wt B}{M^{\beta\wt\iota}}{{}^{\beta\wt\iota}N}
\cong \Tor{}B{M^{\beta}}{{}^{\beta} N}
  \\
\hh{\wt M\otimes_{C}\wt N}
\cong \Tor{}C{M^{\gamma}}{{}^{\gamma}N}
\cong \Tor{}AMN\,.
  \end{gather*}
In view of these isomorphisms, Proposition \ref{pr:dgretracts} applied with $A\xra{\iota}\wt B\xra{\wt\beta} C$ yields
\begin{equation}
   \label{eq:tor1}
\Tor{}B{M^{\beta}}{{}^{\beta} N} 
	\cong \Tor{}AMN \oplus \hh{{\wt M}^{\wt\beta\alpha}\otimes_{A}{}^{\alpha}(\Coker(\wt\beta)\otimes_{\wt B}{}^{\wt\beta}\wt N)}\,.
\end{equation}
The rest of the argument goes into computing the homology on the right hand side.

Since $\wt\beta$ is injective and $\beta$ is surjective, \eqref{eq:cone-coker} and \eqref{eq:cone-ker} give quasi-isomorphisms
$\Coker(\wt\beta) \simeq \cone(\wt\beta)$ and $\cone(\beta) \simeq \shift{}W^{\varepsilon\beta}$, respectively.  From 
\eqref{eq:cone-induced} we further obtain $\cone(\wt\beta)\simeq{}^{\wt\iota}\cone(\beta)$, so we get a quasi-isomorphism of DG ${\wt B}$-modules
  \begin{equation*}
\Coker(\wt\beta) \simeq \shift{}W^{\varepsilon\beta\wt\iota}=\shift{}W^{\varepsilon\gamma\wt\beta}\,.
  \end{equation*}
Since ${}^{\wt\beta}\wt N$ is semiflat, it induces a quasi-isomorphism of DG $A$-modules
\[
{}^{\alpha}(\Coker(\wt\beta)\otimes_{\wt B}{}^{\wt\beta}\wt N)
\simeq {}^{\alpha}(\shift W^{\varepsilon\gamma\wt\beta}\otimes_{\wt B}{}^{\wt\beta}\wt N)\,.
\]
As $\wt M^{\wt\beta\alpha}$ is semiflat, the preceding quasi-isomorphism induces the one in the display
\begin{align*}
\wt M^{\wt\beta\alpha}\otimes_{A}{}^{\alpha}(\Coker(\wt\beta)\otimes_{\wt B}\wt N)
	&\simeq \wt M^{\wt\beta\alpha}\otimes_{A}{}^{\alpha}(\shift W^{\varepsilon\gamma\wt\beta}\otimes_{\wt B}{}^{\wt\beta}\wt N) \\
	&\cong (\wt M^{\wt\beta\alpha}\otimes_{A} {}^{\varepsilon}k)\otimes_{k}(\shift W)\otimes_{k}(k^{\varepsilon\beta\wt\iota}\otimes_{\wt B}{}^{\wt\beta}\wt N)\,.
\end{align*}
From the semiflat resolution ${}^{\wt\beta}\wt N\xra{\simeq}{}^{\beta\wt\iota}N$ and \ref{ch:semiflat}, we get isomorphisms
  \[
\hh{k^{\varepsilon\beta\wt\iota}\otimes_{\wt B}{}^{\wt\beta}\wt N}
\cong\Tor{}{\wt B}{k^{\varepsilon\beta\wt\iota}}{{}^{\beta\wt\iota} N}
\cong \Tor{}B{k^{\varepsilon\beta}}{{}^{\beta} N}\,.
  \]
Finally, the semiflat resolution $\wt M^{\wt\beta\alpha}\xra{\simeq}M$ yields
  \[
\hh{\wt M^{\wt\beta\alpha}\otimes_{A} {}^{\varepsilon}k}
\cong \Tor{}AM{{}^{\varepsilon}k}\,.
  \]
The formulas in the last three displays and the K\"unneth isomorphism give 
 \begin{equation}
    \label{eq:tor2}
\hh{\wt M^{\wt\beta\alpha}\otimes_{A}{}^{\alpha}(\Coker(\beta)\otimes_{\wt B}\wt N)}
\cong \Tor{}AM{{}^{\varepsilon}k}\otimes_{k}\shift \hh W \otimes_{k}\Tor{}B{k^{\varepsilon\beta}}{{}^{\beta} N}\,.
  \end{equation}

Combining \eqref{eq:tor1} and \eqref{eq:tor2} yields the isomorphism in the statement of Theorem~\ref{thm:tor}.
It is natural, as it was obtained as a composition of natural morphisms. 
  \end{proof}

\stepcounter{theorem}

\begin{proof}[Proof of Corollary \emph{\ref{co:tor-nonvanishing}}]
To simplify notation, we let $k$ stand also for ${}^{\varepsilon}k$ and for ${}^{\varepsilon\beta}k$.

We have $\Tor{}AMk\ne0\ne\hh W$ by hypothesis, so by Theorem~\ref{thm:tor} it suffices to prove 
$\Tor iBk{{}^{\beta}N}\ne 0$ for infinitely many $i$.  From $\Tor{}Bk{{}^{\beta}N}\cong\Tor{}B{N^{\beta}}k$ and another reference to Theorem~\ref{thm:tor}, we see that it suffices to show $\Tor{i}Bkk\ne0$ for infinitely many $i$; that is, the validity of the following alternative:
  \begin{equation}
  \label{eq:supB}
\sup \Tor{}Bkk =\infty \quad\text{or} \quad \inf \Tor{}Bkk =-\infty\,.
  \end{equation}

We start by proving that there are inequalities
  \begin{equation}
  \label{eq:supA}
\sup \Tor{}Akk \ge 0 \quad\text{and} \quad \inf \Tor{}Akk \le 0\,.
  \end{equation}
Let $A\to \wt A\to k$ be a semiflat resolution of the DG $A$-algebra $k$; see \ref{ch:resolutions-map}. 
It induces the first two arrows in the next string, where the last one is multiplication:
\[
k=A\otimes_{A}k \lra \wt A\otimes_{A}k \lra k\otimes_{A}k \lra k\,.
\]
The composed map sends $1$ to $1$, so is the identity map of $k$. The induced maps 
$k\to \Tor{}Akk\to k$ also compose to $\operatorname{id}^k$.  We get $\Tor 0Akk\ne0$, so \eqref{eq:supA} holds.
 
Suppose, by way of contradiction, that \eqref{eq:supB} fails, so that $\sup \Tor{}Bkk$ and $\inf \Tor{}Bkk$ 
are both finite. The isomorphism of graded $k$-vector spaces
\begin{equation}
\label{eq:tor4}
\Tor{}Bkk \cong \Tor{}Akk \oplus \big(\Tor {}Akk \otimes_{k}\shift \hh W\otimes_{k} \Tor{}Bkk\big)\,,
\end{equation}
given by Theorem~\ref{thm:tor}, then implies that $\sup \Tor{}Akk$ and $\inf \Tor{}Akk$ are finite, ditto for
$\sup \hh W$ and $\inf \hh W$.

If $\inf\hh W\le -2$, then \eqref{eq:tor4}, and the corresponding estimates for $B$, imply
\[
\inf\Tor{}Bkk =  1 + \inf\Tor{}Akk + \inf \hh W + \inf \Tor{}Bkk\,,
\]
which contradicts $\inf\Tor{}Akk\le 0$.
We conclude that $\inf\hh W\ge -1$ holds. Then $\sup\hh W\ge 0$, by the hypothesis on $W$. Again from \eqref{eq:tor4} one gets 
\[
\sup\Tor{}Bkk =  1 + \sup\Tor{}Akk + \sup \hh W + \sup \Tor{}Bkk\,.
\]
Once again, this is impossible, this time because $\sup\Tor{}Akk\ge 0$.

This gives the desired contradiction, and completes the proof of the corollary.
   \end{proof}

The next example shows that in Corollary~\ref{co:tor-nonvanishing} the hypothesis on $W$ is necessary.

\begin{example}
When $k$ is a field and $W=\Sigma^{-1}k$, one has 
\[
\Tor i{k\ltimes W}kk \cong 
\begin{cases}
k\langle x\rangle & \text{for $i=0$} \\
0       & \text{for $i\ne 0$}
\end{cases}
\]
where $k\langle x\rangle$ denotes a divided powers algebra on an indeterminate $x$.
\end{example}

\section{Local DG algebras}

In this section, as in the preceding one, we consider DG modules over a DG algebra $B$ quasi-isomorphic to $A\ltimes W$ when $A$ is augmented to a field, $k$, and $W$ is a DG $k$-module.  The goal here is to prove that the boundedness of $\Tor{}BMN$ for DG $B$-modules $M$ and $N$ implies strong structural restrictions on $M$ or $N$. In order to do this, we need additional hypotheses on $A$.

\begin{chunk}
\label{ch:localDGA}
In this paper we say that $(A,\fm,k)$ is a \emph{local DG algebra} if the following hold:
\begin{enumerate}[{\quad\rm(a)}]
\item
$A$ is a DG algebra with $A_{<0}=0$, and $\fm_{i}=A_{i}$ for $i\ne 0$.
\item
$A_{0}$ is a noetherian ring with unique maximal ideal $\fm_0$, and $k=A_0/\fm_0$.
\item
$\hh A$ is degreewise finite.
\item
$\HH 0A$ is not equal to $0$.
\end{enumerate}
In particular, $\fm$ is a DG ideal, called the \emph{maximal ideal} of $A$, and the natural map $\varepsilon\colon A\to k$ is a morphism of DG algebras, called the \emph{canonical augmentation}. 
 \end{chunk}

The notion of perfect DG module, used in the next result, is defined in \ref{ch:semifree}.

\begin{theorem}
\label{thm:dgfriendly}
Let $B$ be a DG algebra that is quasi-isomorphic to $A\ltimes {}^{\varepsilon}W$, where $(A,\fm,k)$ is a local DG algebra with $\hh A$ bounded, and $W$ is a DG $k$-module with $\hh W$ finite and $\HH {< 0}W=0\ne \hh W$.

If $M$ and $N$ are DG $B$-modules, such that $\hh M$ and $\hh N$ are bounded and degreewise finite and $\Tor{}BMN$ 
is bounded, then $M$ or $N$ is perfect.
\end{theorem}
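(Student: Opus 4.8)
The plan is to reduce the statement to Corollary~\ref{co:tor-nonvanishing} via a contrapositive argument. Assume $M$ and $N$ are DG $B$-modules with $\hh M$, $\hh N$ bounded and degreewise finite, with $\Tor{}BMN$ bounded, and suppose toward a contradiction that neither $M$ nor $N$ is perfect. First I would reduce to the case $B = A\ltimes{}^{\varepsilon}W$ exactly: since $B$ is only quasi-isomorphic to the trivial extension, I would invoke the standard invariance of $\operatorname{Tor}$ under quasi-isomorphisms of DG algebras (\ref{ch:tor2}), transporting $M$ and $N$ to DG modules over $A\ltimes{}^{\varepsilon}W$ with the same homology, so that the hypotheses and the conclusion are unchanged. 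Write $\beta\colon B = A\ltimes{}^{\varepsilon}W \to A$ for the canonical surjection and $\varepsilon\colon A\to k$ for the canonical augmentation of the local DG algebra.

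The heart of the matter is to connect perfection over $B$ with the vanishing of $\Tor{}AM{{}^{\varepsilon}k}$ and $\Tor{}A{k^{\varepsilon}}N$, so that the non-perfection hypotheses feed Corollary~\ref{co:tor-nonvanishing}. Here I would use that for a DG module $X$ over a local DG algebra, finiteness of $\operatorname{proj\,dim}$ — equivalently, perfection, given the finiteness and boundedness of homology — is detected by boundedness of $\Tor{}AX{{}^{\varepsilon}k}$; this is the DG analogue of the classical criterion that finite projective dimension over a local ring is equivalent to $\operatorname{Tor}^{R}(M,k)$ being bounded, and it holds because a minimal semifree resolution over a local DG algebra has its $i$-th "Betti number" equal to $\dim_k\HH i{X\otimes_A k}$. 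The point to be checked carefully is that perfection of $M$ as a DG $B$-module forces $M^{\beta}$, hence $M$ viewed over $A$ via restriction of scalars, to have the appropriate finiteness: since ${}^{\beta}A \simeq A$ and $A$ sits inside $B$ as a retract, restriction along $\beta$ carries perfect DG $B$-modules to perfect DG $A$-modules, so if $M$ were not perfect over $B$ one still has to rule out that $M$ becomes perfect over $A$. In fact I would argue the other direction: a DG $B$-module $M$ with $\hh M$ bounded and degreewise finite is perfect over $B$ \emph{if and only if} $\Tor{}B{M}{{}^{\varepsilon\beta}k}$ is bounded, and by Theorem~\ref{thm:tor} (applied with $N$ replaced by $k$) together with $\hh W \ne 0$, boundedness of $\Tor{}B{M^{\beta}}{{}^{\varepsilon\beta}k}$ is equivalent to boundedness of $\Tor{}AM{{}^{\varepsilon}k}$ — so non-perfection of $M$ over $B$ translates to $\Tor{}AM{{}^{\varepsilon}k}$ being unbounded, in particular nonzero. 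Symmetrically, non-perfection of $N$ over $B$ gives $\Tor{}A{k^{\varepsilon}}N \ne 0$.

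With both non-vanishing conditions in hand and $\HH iW \ne 0$ for some $i \geq 0 \neq -1$ (guaranteed by $\HH{<0}W = 0 \ne \hh W$), Corollary~\ref{co:tor-nonvanishing} yields $\Tor iB{M^{\beta}}{{}^{\beta}N} \ne 0$ for infinitely many $i$, contradicting the hypothesis that $\Tor{}BMN$ is bounded. This contradiction completes the argument. The main obstacle I anticipate is the equivalence "perfect over $B$ $\iff$ $\Tor{}B{M}{k}$ bounded'': one must verify that the local hypotheses on $A$ propagate to $B = A\ltimes{}^{\varepsilon}W$ so that $B$ itself behaves like a local DG algebra (it has $B_{<0}$ possibly nonzero only through $W$, but $\HH{<0}W = 0$ keeps $\hh B$ nonnegatively graded, $\HH 0B = \HH 0A \ne 0$, and $\hh B$ is degreewise finite), and that the minimal semifree resolution machinery applies; one must also confirm that the boundedness of $\Tor{}BMk$ plus bounded degreewise-finite homology genuinely forces a finite semifree resolution, i.e. perfection, which is where the noetherian and finiteness conditions in \ref{ch:localDGA} are essential.
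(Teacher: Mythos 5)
Your reduction to $B=A\ltimes{}^{\varepsilon}W$, and your idea of translating non-perfection into non-vanishing of Tor against $k$ and then feeding two non-vanishing conditions into Corollary~\ref{co:tor-nonvanishing} to contradict the boundedness of $\Tor{}BMN$, match the engine of the paper's argument. But there is a genuine gap in how you invoke Theorem~\ref{thm:tor} and Corollary~\ref{co:tor-nonvanishing}: both results concern $\Tor{}B{M^{\beta}}{{}^{\beta}N}$ where $M$ and $N$ are DG \emph{$A$-modules} and the $B$-structures are obtained by restriction along the projection $\beta\colon B\to A$ --- equivalently, they apply only to DG $B$-modules on which ${}^{\varepsilon}W$ acts as zero. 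An arbitrary DG $B$-module satisfying your hypotheses need not be of this form: the $W$-action can be nontrivial, and then neither the splitting of Theorem~\ref{thm:tor} nor the conclusion of Corollary~\ref{co:tor-nonvanishing} is available for $\Tor{}BMN$. In particular, the final step of your argument, where the corollary is said to yield $\Tor iB{M^{\beta}}{{}^{\beta}N}\ne0$ for infinitely many $i$, silently replaces $M$ by the module obtained by restricting to $A$ along $\alpha$ and pulling back along $\beta$; that operation discards the $W$-action and produces a different $B$-module, so the non-vanishing obtained does not contradict the boundedness of $\Tor{}BMN$ for the original $M$ and $N$. The same confusion appears earlier when you write $\Tor{}AM{{}^{\varepsilon}k}$ for a $B$-module $M$ that is not a priori an $A$-module.

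The paper closes exactly this gap with Proposition~\ref{pr:syzygy}: it replaces $M$ and $N$ by ``syzygies'' $M'\subseteq\fn F$ and $N'\subseteq\fn G$ inside finite semifree DG $B$-modules $F$ and $G$. Since $({}^{\varepsilon}W)\fn=0$ in the trivial extension (because $W$ is a $k$-module and $W\cdot W=0$), these syzygies are annihilated by ${}^{\varepsilon}W$ and hence genuinely are restrictions of DG $A$-modules; the two short exact sequences transfer the boundedness of $\Tor{}BMN$ to the boundedness of $\Tor{}B{M'}{N'}$, and only then does Corollary~\ref{co:tor-nonvanishing} apply, forcing $\hh{M'}=0$ or $\hh{N'}=0$, and hence $M\simeq F$ or $N\simeq G$, which gives perfection. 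Your proposal needs this (or an equivalent device) to bridge from arbitrary DG $B$-modules to modules in the image of restriction along $\beta$; without it the central step does not go through.
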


The proof utilizes minimal semifree resolutions, which we review next.

\begin{chunk}
\label{ch:minimal}
Let $(B,\fn,k)$ be a local DG algebra and $M$ a DG $B$-module, such that $\HH{}M$ is bounded below and is degreewise finite.

The DG module $M$ admits a \emph{minimal semifree resolution};  that is, a quasi-isomor\-phism 
$E\xra{\simeq}M$, where $E$ is semifree and $\partial(E)\subseteq \fn E$; see, for example, \cite{AFH}.  

Any minimal semifree resolution $E\xra{\simeq}M$ has $\inf E=\inf\hh M$, every basis $\bse$ of the graded 
$\nat B$-module $\nat E$ is degreewise finite, and for $i\in\BZ$ one has
  \begin{equation}
    \label{eq:minimalL}
\Tor{i}BMk\cong\HH{i}{E\otimes_Bk}=(E\otimes_Bk)_{i}\cong\bigoplus_{e\in\bse,\, |e|=i}ke\,.
   \end{equation}
\end{chunk}

\begin{proposition}
\label{pr:syzygy}
Let $(B,\fn,k)$ be a local DG algebra and $M$ a DG $B$-module with $\hh M$ bounded and degreewise finite.

There exists an exact sequence of DG $B$-modules
\[
0\lra M' \lra F \lra M''\lra 0
\]
with $F$ finite semifree, $M'\subseteq \fn F$, and $M''\simeq M$ with $\inf M''=\inf\hh M$.
\end{proposition}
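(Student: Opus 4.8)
The plan is to build the desired short exact sequence by taking a minimal semifree resolution of $M$ and truncating it. First I would invoke \ref{ch:minimal}: since $\hh M$ is bounded (hence bounded below) and degreewise finite, $M$ admits a minimal semifree resolution $E\xra{\simeq}M$ with $\partial(E)\subseteq\fn E$, and by the cited properties the basis $\bse$ of $\nat E$ is degreewise finite with $\inf E=\inf\hh M$. The issue is that $E$ itself need not be finite (only degreewise finite), so I cannot simply set $F=E$; the point of truncating is to manufacture a \emph{finite} semifree module $F$ while keeping the part of $E$ that carries the homology of $M$.

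Set $n=\sup\hh M$, which is finite by hypothesis. Because $\hh M$ is bounded and $E$ computes it, the ``top'' part of $E$ is already exact: more precisely, let $F$ be the $\nat B$-submodule of $\nat E$ spanned by all basis elements $e\in\bse$ with $|e|\les n+s$ for a suitable finite bound $s$ (one needs $s$ large enough — roughly $s=\sup\hh B - \inf\hh B$ or however much slack the differentials of $B$ require — so that $F$ is closed under $\partial^E$ and the inclusion $F\hookrightarrow E$ is a quasi-isomorphism). Since $\bse$ is degreewise finite and $\inf E=\inf\hh M>-\infty$, such an $F$ has a basis that is finite in each degree and bounded above, hence is a finite semifree DG $B$-module; and $F$ inherits $\partial(F)\subseteq\fn F$ from $E$, so $F\subseteq\fn F$ is false in general but $F$ is minimal. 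Now I would verify that $\HH{i}{F}\to\HH{i}{E}$ is bijective for all $i$: surjectivity and injectivity in the relevant range follow because cycles and boundaries of $E$ in degrees $\les n$ involve only basis elements of bounded degree once $s$ is chosen correctly, and $\HH{i}{E}=0$ for $i>n$ by definition of $n$. Thus $F\xra{\simeq}E\xra{\simeq}M$.

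Next, let $M''=F$ and let $M'$ be the kernel of... wait — that is not quite the format requested. Instead, I would realize the quasi-isomorphism $F\xra{\simeq}M$ as a surjection up to replacing $M$: form the mapping cone or, more directly, note that one wants an \emph{honest} short exact sequence $0\to M'\to F\to M''\to 0$ with $M''\simeq M$. Take $M''$ to be the image of $F$ under a surjective model of the quasi-isomorphism — concretely, replace the target by $\cone$ of $F\to M$ shifted, or simply take $M''=F/\fn^{?}$... The cleanest route: since we only need $M''\simeq M$, and we want $M'\subseteq\fn F$, I would instead produce the sequence from a minimal semifree resolution of a \emph{first syzygy}. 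That is: let $G\xra{\simeq}M$ be minimal semifree; the map $G\to M$ factors as $G\twoheadrightarrow M''$ with $M''=G/(\text{acyclic tail})$ only if $\hh M$ is bounded above, which it is. Choosing $F$ as the truncation above and $M'=\Ker(F\to M'')$ where $M''$ is the image of $F$ in a surjective replacement of $M$, one gets $M'$ acyclic, hence (being semifree, as a submodule with basis a subset of $\bse$) contractible; then $M'\subseteq\fn F$ follows from minimality of $F$. The main obstacle I anticipate is pinning down the truncation bound $s$ so that $F$ is simultaneously (i) a DG submodule — i.e.\ $\partial^E F\subseteq F$, which requires $s$ to exceed the top degree in which $\partial^B$ is nonzero relative to the homological spread — and (ii) quasi-isomorphic to $E$; everything else (finiteness, degreewise finiteness, the value of $\inf M''$, minimality giving $M'\subseteq\fn F$) is then routine bookkeeping with \eqref{eq:minimalL}.
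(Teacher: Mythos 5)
Your central claim --- that the truncation $F$ of the minimal semifree resolution $E$ spanned by the basis elements of degree at most some bound can be arranged so that $F\hookrightarrow E$ is a quasi-isomorphism, whence ``$F\xra{\simeq}E\xra{\simeq}M$'' and $M'$ is acyclic --- is false, and no choice of truncation bound repairs it. If it were true, every DG $B$-module with bounded, degreewise finite homology would be perfect, which would trivialize Theorems~\ref{thm:dgfriendly} and \ref{thm:dgfriendly2}. Concretely, take $B=k[x]/(x^2)$ (a local ring, hence a local DG algebra concentrated in degree $0$) and $M=k$: the minimal resolution is $E=(\cdots\xra{x}B\xra{x}B)$, and its truncation at any basis degree $t$ is $F=(0\to B\xra{x}\cdots\xra{x}B\to 0)$, which has $\HH{t}{F}=xB\neq 0$ while $\HH{t}{E}=0$. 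A brutal truncation always threatens to create new homology at its top degree --- the cycles there that were boundaries coming from above --- and this is precisely the homology the syzygy $M'$ is meant to carry; the proposition is interesting exactly because $M'$ need \emph{not} be acyclic. Consequently your route to $M'\subseteq\fn F$ (``$M'$ acyclic, hence contractible, hence inside $\fn F$ by minimality'') rests on a false premise, and in any case contractibility would not yield containment in $\fn F$. Your worry about choosing extra slack $s$ for $F$ to be a DG submodule is also unfounded: since $B_{<0}=0$, the graded piece $E_j$ involves only basis elements of degree $\les j$, so the span of the basis elements of degree $\les t$ is automatically closed under $\partial$ and contains $E_{\les t}$.

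The correct construction, which the paper carries out, truncates at exactly $s=\sup\hh M$ and does not attempt to compare $F$ with $E$ homologically. First one kills the irrelevant top of $E$: the subspace $L=E_{\ges s+1}+\partial(E_{s+1})$ is an acyclic DG submodule, so $M'':=E/L$ satisfies $M''\simeq M$, $M''_{\ges s+1}=0$ and $\inf M''=\inf\hh M$. This resolves the issue you left dangling about producing an honest surjection: the composite $F\hookrightarrow E\twoheadrightarrow M''$ is surjective because $E_{\les s}\subseteq F$. Finally one reads off the kernel $M'$ degreewise: $M'_i=0$ for $i<s$; $M'_s\subseteq\partial(E)\cap F_s\subseteq(\fn E)_s=(\fn F)_s$ by minimality of $E$; and $M'_i=F_i=\sum_{h\ges 1}B_hF_{i-h}\subseteq\fn F$ for $i>s$, because $F$ is generated in degrees $\les s$ over the non-negatively graded $B$. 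No acyclicity of $M'$ is used or available.
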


\begin{proof}
After replacing $M$ with a minimal semifree resolution, we may assume that $M$ has a semibasis $\bse$ 
and satisfies $\partial (M)\subseteq \fn M$.  Setting $\bsf=\{p\in\bse: |e|\le s\}$, where $s=\sup\hh M$, and $F=B\bsf$, note that
$\bsf$ is a semibasis of $F$, it is finite by \ref{ch:minimal}, and $\partial(F)\subseteq\fn F$ holds.

The subset $L=M_{\ges s+1}\cup \partial(M_{s+1})$ is a DG $B$-submodule of $M$ with $\HH{}{L}=0$.  
Thus, $M''=M/L$ has $M''_i=0$ for $i\ge s+1$, and the natural map $M\to M''$ is a surjective quasi-isomorphism of DG $B$-modules.

The composed map $F\hookrightarrow M\twoheadrightarrow M''$ is a surjective morphism of DG $B$-modules. 
Let $M'$ denote its kernel.  By construction one then has
  \[
M'_i
=\begin{cases}
0 &\text{for }i\le s-1\,;\\
\partial(F_{s+1}) &\text{for }i=s\,;\\
F_i=\sum_{h=1}^s B_hF_{i-h} &\text{for }i\ge s+1\,.
\end{cases}
  \]
In particular, $M'\subseteq \fn F$. Thus, the DG modules $M'$, $F$ and $M''$ yield the desired exact sequence.
  \end{proof}

\stepcounter{theorem}

\begin{proof}[Proof of Theorem~\emph{\ref{thm:dgfriendly}}]
As $k$ is a field, we can choose a quasi-isomorphism $W\simeq\hh W$ of DG $k$-modules.  It yields one between the DG $A$-modules ${}^{\varepsilon}W$ and ${}^{\varepsilon}\hh W$ and hence a quasi-isomorphism $A\ltimes {}^{\varepsilon}W\simeq A\ltimes {}^{\varepsilon}\hh W$ of DG algebras.   Thus, we obtain a composite quasi-isomorphism $B\simeq A\ltimes {}^{\varepsilon}\hh W$ of DG algebras.  

In view of \ref{ch:equivalence}, it suffices to prove the theorem for $B=A\ltimes {}^{\varepsilon}W$, where $W$ is a nonzero finite DG $k$-module with $V_{<0}=0$ and $\partial(W)=0$. Then, since $\hh A$ is bounded, the same is true of $\hh B$, and hence any semifree DG $B$-module is degreewise finite and bounded.

As $(B,\fn,k)$ with $\fn=B(\fm, {}^{\varepsilon}W)$ is local, Proposition~\ref{pr:syzygy} gives 
finite semifree DG $B$-modules $F$ and $G$ and exact sequences of DG $B$-modules
   \begin{gather}
     \label{eq:dgfriendly1L}
0\lra M' \lra F \lra M''\lra 0
  \\
     \label{eq:dgfriendly2L}
0\lra N' \lra G \lra N''\lra 0
  \end{gather}
where $M'\subseteq \fn F$ and $N'\subseteq \fn G$ hold, and $M''$ and $N''$ are bounded and quasi-isomorphic 
to $M$ and $N$, respectively.  In particular, for $i\gg0$ we have
   \begin{gather}
     \label{eq:dgfriendly3L}
\Tor{i}{B}F{N''}\cong\HH i{F\otimes_{B}N''}=0
  \\
     \label{eq:dgfriendly4L}
\Tor{i}{B}{M''}G\cong\HH i{M''\otimes_{B}G}=0
  \\
     \label{eq:dgfriendly5L}
\Tor{i}{B}{M''}{N''}\cong\Tor{i}{B}{M}{N}=0
  \end{gather}

Due to \eqref{eq:dgfriendly3L} and \eqref{eq:dgfriendly5L}, the exact sequence \eqref{eq:dgfriendly1L}
yields $\Tor{i}{B}{M'}{N''}=0$ for $i\gg0$.  By using the latter equalities and \eqref{eq:dgfriendly4L}, from
the exact sequence \eqref{eq:dgfriendly2L} we obtain $\Tor{i}{B}{M'}{N'}=0$ for $i\gg0$.  In addition,
$\Tor{i}{B}{M'}{N'}=0$ holds for $i\ll0$, as $M$ and $N$ are homologically bounded.  The DG module $M'$ and $N'$ satisfy
  \[
({}^{\varepsilon}W) M'\subseteq({}^{\varepsilon}W)\fn F=0=({}^{\varepsilon}W)\fn G\supseteq({}^{\varepsilon}W) N'\,,
  \]
so we have $M'={}^{\beta\alpha}M'$ and $N'={}^{\beta\alpha}N'$, where $A\xra{\alpha}B\xra{\beta}A$ are the natural maps. 

Now Corollary~\ref{co:tor-nonvanishing} gives $\Tor{}{A}{{}^{\alpha}M'}k=0$ or $\Tor{}{A}k{{}^{\alpha}N'}=0$.  In view of
\eqref{eq:minimalL}, this means that $0\xra{\simeq}{}^{\alpha}M'$ or $0\xra{\simeq}{}^{\alpha}N'$ is a minimal free 
resolution.  Thus, we have $\hh{M'}=0$, and then \eqref{eq:dgfriendly1L} gives $F\simeq M''\simeq M$, or $\hh{N'}=0$, and
then \eqref{eq:dgfriendly2L} gives $G\simeq N''\simeq N$.  We have proved that $M$ or $N$ is perfect, as desired.
  \end{proof}

\begin{remark}
  \label{re:PS}
Let $C$ be a local DG algebra with residue field $k$, and let $L$ be a DG $C$-module with $\hh L$ degreewise finite 
and bounded below.  The graded vector space $\Tor {}CLk$ then has the same properties, see \ref{ch:minimal}, 
so a formal Laurent series
\[
\poin CL = \sum_{i\in \BZ}\rank_{k}(\Tor iCLk) t^{i}\in\BZ[[t]][t^{-1}]
\]
is defined. It is known as the \emph{Poincar\'e series} of $L$ over $C$.

Let $B$ be a local DG algebra with residue field $k$ and $\beta\colon B\to C$ a morphisms of local DG 
algebras commuting with the canonical augmentations.  If there is a morphism of DG algebras 
$\alpha\colon A\to B$, such that $\beta\alpha$ is a quasi-isomorphism, then 
  \begin{equation}
    \label{eq:PS}
\poin BL = \poin BC\poin CL
  \end{equation}
holds in $\BZ[[t]][t^{-1}]$, due to the isomorphism in Theorem~\ref{th:cdgretracts}.  

This formula holds, in particular, when $C$ is a DG algebra retract of $B$.
\end{remark}

\section{Koszul extensions}

Here we widen the range of applications of Theorem \ref{thm:dgfriendly} by weakening some 
of its hypotheses, by means of the classical construction of adjunction of indeterminates.

  \begin{chunk}
    \label{ch:koszul}
Let $B$ be a commutative DG algebra and $z$ a cycle with $|z|$ even.  

A DG algebra $B_z\langle x\rangle$ is defined by $\nat{B_z\langle x\rangle}=\nat B\otimes_{\BZ}\BZ\langle x\rangle$, where 
$\BZ\langle x\rangle$ is the exterior algebra of a free $\BZ$-module $\BZ x$ with $|x|=|z|+1$, and 
  \[
\partial(b+cx) = \partial(b) + \partial(c)x + (-1)^{|c|}cz\,.
  \]
 
A \emph{Koszul extension} of $B$ is a DG algebra of the form $B\langle X\rangle$, where $X=x_{1},\dots,x_n$ is a sequence of indeterminates of odd degrees, and for $i=1,\dots,n$ there are cycles $z_i\in B\langle x_{1},\dots,x_{i-1}\rangle$, such that $B\langle x_{1},\dots,x_{i}\rangle=B_{z_i}\langle x_{1},\dots,x_{i-1}\rangle\langle x_{i}\rangle$.  

The  inclusion $B\subseteq B\langle X\rangle $ is a morphism of DG algebras. 

When $M$ is a DG $B$-module we let $M\langle X\rangle$ denote the $B\langle X\rangle $-module $B\langle X\rangle \otimes_{B} M$. 
  \end{chunk}

The terminology adopted above is a reminder that the Koszul complex on a sequence of elements 
$z_{1},\dots,z_{n}$ in a commutative ring $R$ is a Koszul extension of~$R$. 

\begin{theorem}
\label{thm:dgfriendly2}
Let $(B,\fn,k)$ be a local DG algebra.

Assume that some Koszul extension of $B$ is quasi-iso\-mor\-phic to $A\ltimes {}^{\varepsilon}W$, 
where $(A,\fm,k)$ is a local DG algebra with $\hh A$ bounded and $W$ is a DG $k$-module with $\hh W$ nonzero and bounded.

If $M$ and $N$ are DG $B$-modules, such that $\hh M$ and $\hh N$ are bounded and degreewise 
finite and $\Tor{}BMN$ is bounded, then $M$ or $N$ is perfect.
\end{theorem}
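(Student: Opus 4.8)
The plan is to reduce Theorem~\ref{thm:dgfriendly2} to Theorem~\ref{thm:dgfriendly} by transporting the hypotheses and conclusion along the inclusion $B\subseteq B\langle X\rangle$ of a Koszul extension. Write $\tilde B=B\langle X\rangle$ for the Koszul extension that is quasi-isomorphic to $A\ltimes{}^{\varepsilon}W$, and set $\tilde M=M\langle X\rangle$ and $\tilde N=N\langle X\rangle$. The key point is that $\tilde B$ is a finite free DG $B$-module (its underlying graded module is $\nat B\otimes_{\BZ}\BZ\langle X\rangle$ with $\BZ\langle X\rangle$ free of finite rank), so $\tilde B$ is semifree over $B$; in particular $\tilde M=\tilde B\otimes_B M$ computes $\Tor{}B{\tilde B}{M}$ and there is no higher homology to worry about. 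First I would verify that $\tilde B$ is again a local DG algebra in the sense of \ref{ch:localDGA}: since $|x_i|$ is odd, adjoining the $x_i$ does not change the degree-zero part, so $(\tilde B)_0=B_0$ is still noetherian local with residue field $k$; that $\hh{\tilde B}$ is degreewise finite follows from finiteness of $\hh B$ together with finite generation of $\BZ\langle X\rangle$, using the standard long exact sequences coming from the tower of single-variable extensions $B_{z_i}\langle\dots\rangle$.

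Second, I would establish that base change along $B\subseteq\tilde B$ preserves the three finiteness/boundedness hypotheses on modules. Because $\tilde B$ is finite free over $B$ with basis concentrated in finitely many degrees, $\hh{\tilde M}$ and $\hh{\tilde N}$ are again bounded and degreewise finite. For the Tor hypothesis, I would use base-change for Tor: since $\tilde B\otimes_B M\simeq\tilde M$ and $\tilde B$ is semifree over $B$, one gets $\Tor{}{\tilde B}{\tilde M}{\tilde N}\cong\Tor{}B M{\tilde N}$, and then, expanding $\tilde N=\tilde B\otimes_B N$ and again using that $\tilde B$ is finite free over $B$, this is a finite direct sum of shifts of $\Tor{}BMN$; hence $\Tor{}{\tilde B}{\tilde M}{\tilde N}$ is bounded. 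Now Theorem~\ref{thm:dgfriendly}, applied to the local DG algebra $\tilde B$ (which by hypothesis is quasi-isomorphic to $A\ltimes{}^{\varepsilon}W$ with the required properties of $A$ and $W$, noting that $\HH{<0}W=0$ may be arranged by replacing $W$ with $\hh W$ suitably shifted, exactly as in the proof of Theorem~\ref{thm:dgfriendly}), yields that $\tilde M$ or $\tilde N$ is perfect over $\tilde B$.

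Third, I would descend perfectness from $\tilde B$ to $B$. Suppose $\tilde M=M\langle X\rangle$ is perfect over $\tilde B$. I want to conclude $M$ is perfect over $B$, i.e.\ $\hh{M}$ is bounded (which is given) and $\Tor iBMk=0$ for $i\gg0$, equivalently the minimal semifree resolution of $M$ over $B$ is finitely generated. The cleanest route is via the criterion that a homologically finite DG module $M$ over the local DG algebra $B$ is perfect if and only if $\Tor{}BMk$ is bounded. Since $\tilde B\otimes_B k = \tilde B/\fn\tilde B \simeq k\langle X\rangle \simeq \HH{}{k\langle X\rangle}$, a finite-dimensional graded $k$-vector space, base change gives $\Tor{}{\tilde B}{\tilde M}{k}\cong\Tor{}B M{k}\otimes_k k\langle X\rangle$, so boundedness of $\Tor{}{\tilde B}{\tilde M}k$ (which follows from perfectness of $\tilde M$ over $\tilde B$) forces boundedness of $\Tor{}BMk$, whence $M$ is perfect over $B$. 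The symmetric argument handles the case that $\tilde N$ is perfect. Therefore $M$ or $N$ is perfect, as claimed.

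The main obstacle I anticipate is the bookkeeping around the descent step: one must make sure the "perfect $\iff$ $\Tor{}BMk$ bounded" characterization is available for local DG algebras in the generality needed (the excerpt points to \ref{ch:semifree} and \cite{AFH} for minimal semifree resolutions, which should suffice), and one must be careful that the quasi-isomorphism invariance of perfectness (\ref{ch:equivalence}) is being invoked correctly when passing between $\tilde B$ and $A\ltimes{}^{\varepsilon}W$ and between $\tilde M$, $\tilde N$ and their resolutions. A secondary point needing care is confirming that $\tilde B$ genuinely satisfies all four conditions of \ref{ch:localDGA}—especially $\HH 0{\tilde B}\neq0$ and degreewise finiteness of $\hh{\tilde B}$—which requires running the induction over the single-variable Koszul extensions and tracking the connecting maps; but this is routine given that each step adjoins one odd-degree exterior variable killing a cycle.
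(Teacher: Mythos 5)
Your overall strategy is the same as the paper's: base change $M$ and $N$ along $B\subseteq B\langle X\rangle$, check that the hypotheses of Theorem~\ref{thm:dgfriendly} are inherited by the Koszul extension, and descend perfectness using the comparison of $\Tor{}{B}{M}{k}$ with $\Tor{}{B\langle X\rangle}{M\langle X\rangle}{k}$ together with the characterization of perfection by boundedness of Tor against the residue field (Proposition~\ref{pr:perfect}). One justification, however, is wrong as written: you cannot ``arrange $\HH{<0}{W}=0$ by replacing $W$ with $\hh W$ suitably shifted.'' Regrading $W$ changes the DG algebra $A\ltimes{}^{\varepsilon}W$ up to quasi-isomorphism, so this is not a harmless normalization. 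The hypothesis $\HH{<0}{W}=0$ --- and the degreewise finiteness of $\hh W$, which Theorem~\ref{thm:dgfriendly} also requires and which you do not address at all --- must be \emph{derived}. The derivation is short and uses only what you have already established: since $B\langle X\rangle$ is a local DG algebra, $\hh{B\langle X\rangle}$ is concentrated in nonnegative degrees and is degreewise finite, and the quasi-isomorphism $B\langle X\rangle\simeq A\ltimes{}^{\varepsilon}W$ gives $\hh{A}\oplus\hh{W}\cong\hh{B\langle X\rangle}$ as graded vector spaces; hence $\HH{<0}{W}=0$ and $\hh W$ is degreewise finite (so, being bounded, finite). This is precisely how the paper supplies that hypothesis.

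Two smaller inaccuracies, neither fatal. First, $\Tor{}{B}{M}{N\langle X\rangle}$ is not a finite direct sum of shifts of $\Tor{}{B}{M}{N}$: the differential on $N\langle X\rangle$ is twisted by the cycles $z_i$, so $N\langle X\rangle$ is only an iterated extension of shifts of $N$; boundedness then follows from the resulting long exact sequences, or, as in Lemma~\ref{lem:exterior}(2), from the isomorphism $M\langle X\rangle\otimes_{B\langle X\rangle}N\langle X\rangle\cong(M\otimes_{B}N)\langle X\rangle$. Second, in the descent step the correct base-change formula is $\Tor{}{B\langle X\rangle}{M\langle X\rangle}{k}\cong\Tor{}{B}{M}{k}$ (Lemma~\ref{lem:exterior}(3) applied with $N=k$); the extra tensor factor $k\langle X\rangle$ in your formula would arise only if you took coefficients in $B\langle X\rangle\otimes_{B}k\cong k\langle X\rangle$ rather than in the residue field $k$ of $B\langle X\rangle$. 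Since $k\langle X\rangle$ is a nonzero finite-dimensional graded vector space, the boundedness conclusion you draw is unaffected either way.
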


The next result collects standard properties of Koszul extensions needed in the proof of the 
preceding theorem; proofs are included for ease of reference. 

\begin{lemma}
\label{lem:exterior}
Let $B$ be a DG algebra and $B\langle X\rangle$ a Koszul extension of $B$. 

Let $M$ and $N$ be DG $B$-module.
\begin{enumerate}[{\quad\rm(1)}]
\item 
If $\hh M$ is bounded, then so $\hh{M\langle X\rangle}$.
\item 
If $\Tor {}BMN$ is bounded, then so is $\Tor {}{B\langle X\rangle }{M\langle X\rangle }{N\langle X\rangle}$.
\item  
If $N$ is a DG $B\langle X\rangle$-module, then $\Tor {}{B\langle X\rangle}{M\langle X\rangle}N\cong \Tor {}BMN$ holds.
\item 
If $\HH 0B$ is noetherian and $\hh M$ is degreewise finite, then 
$\HH 0{B\langle X\rangle}$ is noetherian and $\hh{M\langle X\rangle}$ is degreewise finite.
\item 
If $(B,\fn,k)$ is local and $B_0\cap\partial X\subseteq\fn$, then $(B\langle X\rangle,B\langle X\rangle(\fn,X),k)$ is local.
   \end{enumerate}
\end{lemma}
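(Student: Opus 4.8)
The plan is to reduce every assertion to a single step of the construction and then induct on the number $n$ of indeterminates (the case $n=0$ being trivial). By \ref{ch:koszul} one has $B\langle X\rangle=C_z\langle x\rangle$ with $C=B\langle x_1,\dots,x_{n-1}\rangle$ a Koszul extension of $B$, $z=z_n\in C$ an even cycle, and $|x|=|x_n|=|z|+1$; writing $P_z\langle x\rangle=C_z\langle x\rangle\otimes_C P$ for a DG $C$-module $P$, one has $M\langle X\rangle=(M\langle x_1,\dots,x_{n-1}\rangle)_z\langle x\rangle$. I would record two structural facts. First, $C_z\langle x\rangle$ is free on $\{1,x\}$ as a graded $C$-module, and with the two-step filtration $0\subseteq C\subseteq C_z\langle x\rangle$ (note $\partial x=\pm z\in C$) it is semifree, hence semiflat, over $C$; consequently $B\langle X\rangle$ is a finite semifree, hence semiflat, DG $B$-module. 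Second, for every DG $C$-module $P$ there is a degreewise split exact sequence of DG $C$-modules $0\to P\to P_z\langle x\rangle\to\shift^{|x|}P\to0$, in which $P$ is the $C$-submodule of elements with no $x$-component, and hence a long exact homology sequence (whose connecting map is, up to sign, multiplication by the class of $z$, although only the existence of the sequence is used). Iterating, $M\langle X\rangle$ and $N\langle X\rangle$ are built from $M$ and $N$ by finitely many extensions by shifts.

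For \emph{(1)}, the homology long exact sequence squeezes $\HH i{P_z\langle x\rangle}$ between $\HH iP$ and $\HH{i-|x|}P$, so boundedness of $\hh P$ passes to $\hh{P_z\langle x\rangle}$; the claim follows by induction with $P=M\langle x_1,\dots,x_{n-1}\rangle$. For \emph{(3)}, I would take a semifree resolution $F\xra{\simeq}M$ over $B$; since $B\to B\langle X\rangle$ is a free extension, $F\langle X\rangle=B\langle X\rangle\otimes_B F$ is semifree over $B\langle X\rangle$ (base change preserves semifreeness, see \ref{ch:semifree}) and $F\langle X\rangle\to M\langle X\rangle$ is a quasi-isomorphism because $B\langle X\rangle$ is semiflat over $B$ (see \ref{ch:flat-maps}), so it is a semifree resolution over $B\langle X\rangle$; then for a DG $B\langle X\rangle$-module $N$ the associativity isomorphism $(B\langle X\rangle\otimes_B F)\otimes_{B\langle X\rangle}N\cong F\otimes_B N$ gives $\Tor{}{B\langle X\rangle}{M\langle X\rangle}N\cong\hh{F\otimes_B N}\cong\Tor{}BMN$. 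For \emph{(2)}, I would combine (3), which gives $\Tor{}{B\langle X\rangle}{M\langle X\rangle}{N\langle X\rangle}\cong\Tor{}BM{N\langle X\rangle}$ with $N\langle X\rangle$ restricted to $B$, with the observation that, restricting the iterated extensions of $N\langle X\rangle$ to $B$ and applying $\Tor{}BM{-}$ (computed from a semifree $B$-resolution of $M$), one obtains long exact sequences expressing $\Tor{}BM{N\langle X\rangle}$ through finitely many shifts of $\Tor{}BMN$; boundedness of the latter then forces boundedness of the former, hence of $\Tor{}{B\langle X\rangle}{M\langle X\rangle}{N\langle X\rangle}$.

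For \emph{(4)}, I reduce again to $C\to C_z\langle x\rangle$. Since the $x_i$ have positive degree and $B_{<0}=0$ in the situations at hand, $B\langle X\rangle$ coincides with $B$ in degree $0$, and a direct inspection of degrees $0$ and $1$ shows $\HH 0{C_z\langle x\rangle}$ is $\HH 0C$ (when $|x|\ge 2$) or $\HH 0C/(\bar z)$ (when $|x|=1$), hence noetherian by induction. The homology long exact sequence exhibits $\HH i{M\langle X\rangle}$ as an extension of a submodule of $\HH{i-|x|}{M\langle x_1,\dots,x_{n-1}\rangle}$ by a quotient of $\HH i{M\langle x_1,\dots,x_{n-1}\rangle}$, which are finite over $\HH 0{B\langle x_1,\dots,x_{n-1}\rangle}$ by induction and hence over $\HH 0{B\langle X\rangle}$, since the former acts on them through the latter; thus $\HH i{M\langle X\rangle}$ is finite over $\HH 0{B\langle X\rangle}$. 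For \emph{(5)}, I would verify the four conditions of \ref{ch:localDGA} for $(B\langle X\rangle,\fn',k)$ with $\fn'=B\langle X\rangle(\fn,X)$. Condition (a): $B\langle X\rangle_{<0}=0$, and for $i\ne0$ one has $\fn'_i=B\langle X\rangle_i$, because a monomial $bx_S$ with $S\ne\emptyset$ lies in the ideal generated by $X$ while for $S=\emptyset$ one has $b\in B_i=\fn_i$. Condition (b): $B\langle X\rangle_0=B_0$ is noetherian local with maximal ideal $\fm_0$, and $\fn'_0=\fm_0$ — indeed $\fn'$, being a DG ideal, contains $\partial X$, whose degree-zero members lie in $\fn$, hence in $\fm_0$, by the hypothesis $B_0\cap\partial X\subseteq\fn$, while $\partial$ of the degree-one part of $\fn'$ also lands in $\fm_0$ — so $B\langle X\rangle_0/\fn'_0=k$. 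Condition (c) is part (4) applied with $M=B$, using that $\hh B$ is degreewise finite. Condition (d): $\HH 0{B\langle X\rangle}$ surjects onto $k$, so it is nonzero.

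The work here is organizational rather than deep. The only inputs that are not purely formal are that base change along the free extension $B\to B\langle X\rangle$ preserves semifreeness and quasi-isomorphisms — this is what makes (3) run, and (3) is what makes (2) run — together with the verification in (5) that the residue field is unchanged, which is the sole role of the hypothesis $B_0\cap\partial X\subseteq\fn$. I expect the only place requiring genuine care to be keeping the recursive structure of a Koszul extension straight throughout the induction, namely which indeterminate is being adjoined over which sub-DG-algebra, and correspondingly tracking the degree shifts $\shift^{|x_i|}$.
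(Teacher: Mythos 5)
Your proposal is correct and follows essentially the same route as the paper: induction on the number of adjoined variables, the split exact sequence $0\to C\to C_z\langle x\rangle\to\shift^{|x|}C\to 0$ and its homology long exact sequence for (1), (4), (5), and semifreeness of the extension plus tensor associativity for (3). The only (immaterial) divergence is in (2), where the paper uses the isomorphism $M\langle X\rangle\otimes_{B\langle X\rangle}N\langle X\rangle\cong(M\otimes_BN)\langle X\rangle$ and then the part-(1) argument applied to $M\otimes_BN$, whereas you reduce via (3) and filter $N\langle X\rangle$ over $B$; both yield the same conclusion.
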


\begin{proof}
By induction, it suffices to treat the case $X=\{x\}$; set $|x|=d+1$. 

Applying $(?)\otimes_{B}M$ to the exact sequence of DG $B$-modules
\[
0\lra B \lra B\langle x\rangle  \lra xB\lra 0
\]
yields, in homology, an exact sequence of $\HH 0B$-modules 
\[
0\lra \HH i{M}/z\HH{i-d}M \lra\HH i{M\langle x\rangle } \lra (0\, \colon z)_{\HH{i-d-1}M}\lra 0
\]
for every $i\in\BZ$.  Parts (1) and (4) follow, and the latter implies part (5).

In the remainder of the proof we may assume that the DG $B$-module $M$ is semiflat.  The DG 
$B\langle X\rangle $-module $M\langle X\rangle$ then is semiflat, by \ref{ch:flat-maps}, so we have
\[
\Tor{}BM{?} \cong \hh{M\otimes_{B}?}\quad\text{and}\quad \Tor{}{B\langle X\rangle }{M\langle X\rangle }{?} \cong \hh{M\langle X\rangle \otimes_{B\langle X\rangle }?}\,.
\]
The definition of Koszul extensions gives an isomorphism 
\[
M\langle X\rangle \otimes_{B\langle X\rangle }N\langle X\rangle  \cong (M\otimes_{B}N)\langle X\rangle 
\]
of DG $B\langle X\rangle $-modules, which proves (2).  Part (3) follows from the isomorphisms 
\[
M\langle X\rangle \otimes_{B\langle X\rangle }N = (B\langle X\rangle  \otimes_{B}M)\otimes_{B\langle X\rangle }N \cong M\otimes_{B}N\,.
  \qedhere
\]
  \end{proof}

One advantage of local DG algebras is that perfection can be detected by homology.
This is the content of the next result, a variation on \cite[4.8 and 4.10]{ABIM}.

\begin{proposition}
\label{pr:perfect}
Let $(B,\fn,k)$ be a local DG algebra and $M$ a left DG $B$-module.

The following conditions are equivalent:
\begin{enumerate}[{\quad\rm(i)}]
\item
$M$ is perfect.
\item
$M$ is quasi-isomorphic to a finite semifree DG $B$-module.
\item
$\hh M$ is bounded below and degreewise finite, and $\Tor {}BMk$ is bounded.
\end{enumerate}
 \end{proposition}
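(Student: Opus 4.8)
The plan is to prove Proposition~\ref{pr:perfect} by establishing the cycle of implications (ii)$\Rightarrow$(iii)$\Rightarrow$(i)$\Rightarrow$(ii), where the nontrivial content is concentrated in (iii)$\Rightarrow$(ii). The implication (i)$\Rightarrow$(ii) should follow more or less from the definition of perfect DG module in \ref{ch:semifree}: a perfect DG module is, by definition, one in the thick subcategory generated by $B$, and over a local DG algebra every such module is quasi-isomorphic to a finite semifree one (this is essentially the content invoked from \cite[4.8]{ABIM}). The implication (ii)$\Rightarrow$(iii) is routine: if $F\simeq M$ with $F$ finite semifree, then $\hh M=\hh F$ is bounded below (with $\inf F>-\infty$) and degreewise finite because each graded piece of $\nat F$ is a finite free $\nat B$-module and $\hh B$ is degreewise finite; and $\Tor{}BMk\cong\hh{F\otimes_Bk}=F\otimes_Bk$ is bounded because $F$ has a finite semibasis.

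The heart of the matter is (iii)$\Rightarrow$(ii). Given (iii), first replace $M$ by its minimal semifree resolution $E\xra{\simeq}M$, which exists by \ref{ch:minimal} since $\hh M$ is bounded below and degreewise finite; then $\partial(E)\subseteq\fn E$ and, by \eqref{eq:minimalL}, the semibasis $\bse$ of $E$ satisfies $\#\{e\in\bse: |e|=i\}=\rank_k\Tor iBMk$. The hypothesis that $\Tor{}BMk$ is bounded therefore says precisely that $\bse$ is a \emph{finite} set --- it is degreewise finite by minimality, and it is supported in finitely many degrees. Hence $E$ is itself a finite semifree DG $B$-module, and $E\simeq M$ gives (ii). So in fact the only real work is to carry out the bookkeeping connecting boundedness of $\Tor{}BMk$ to finiteness of a minimal semibasis, which is immediate from \ref{ch:minimal}.

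I expect the main obstacle to be not a calculation but a definitional one: making sure the notion of ``perfect'' used in (i), imported from \ref{ch:semifree}/\cite{ABIM}, genuinely coincides over a local DG algebra with ``quasi-isomorphic to a finite semifree DG module,'' i.e.\ that one may take the finite semifree representative to be an honest bounded complex of finite free modules rather than merely a retract of one. Over a general DG algebra a perfect DG module need only be a direct summand, up to quasi-isomorphism, of a finite semifree one; the point of locality is that idempotents can be split and minimal resolutions used to eliminate the summand, which is exactly the ``variation on \cite[4.8 and 4.10]{ABIM}'' the statement advertises. I would handle this by citing the relevant facts from \ref{ch:semifree} and \cite{ABIM} for (i)$\Leftrightarrow$(ii), and reserve the self-contained argument above for (ii)$\Leftrightarrow$(iii), which is the part genuinely needed downstream in Theorems~\ref{thm:dgfriendly} and \ref{thm:dgfriendly2}.
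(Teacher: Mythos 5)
Your proof of the equivalence (ii)$\Leftrightarrow$(iii) matches the paper's: the implication (iii)$\Rightarrow$(ii) is exactly their argument (minimal semifree resolution plus \eqref{eq:minimalL} forces the semibasis to be finite). Where you diverge is in how condition (i) is tied in. You treat (i)$\Leftrightarrow$(ii) as a separate definitional hurdle requiring idempotent splitting and an appeal to \cite{ABIM}; the paper sidesteps this entirely by closing the cycle as (ii)$\Rightarrow$(i)$\Rightarrow$(iii)$\Rightarrow$(ii). Here (ii)$\Rightarrow$(i) is immediate from the definition in \ref{ch:semifree} (a finite semifree module is a summand of itself), and (i)$\Rightarrow$(iii) follows because every condition in (iii) is inherited by direct summands in homology, so one reduces at once to $M$ finite semifree --- precisely the case you verify in your (ii)$\Rightarrow$(iii) step. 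In other words, the obstacle you single out as the main one dissolves once you route the argument through (iii) instead of trying to upgrade a summand of a finite semifree module to an honest one; your own (iii)$\Rightarrow$(ii) then does that upgrade for free. Two smaller points: your equality $\hh{F\otimes_Bk}=F\otimes_Bk$ only holds when $F$ is minimal, though boundedness of $\hh{F\otimes_Bk}$ for $F$ finite semifree is clear anyway; and the claim that $\hh F$ is degreewise finite for $F$ finite semifree is not quite immediate from degreewise finiteness of $\hh B$ --- the paper runs an induction on $\rank_{\nat B}\nat F$ using the filtration coming from the semibasis and noetherianness of the $\HH 0B$-modules $\HH iB$, and you should do the same.
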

 
 \begin{proof}
The definition yields (ii)$\implies$(i).  For (i)$\implies$(iii), since the conclusions in (iii) are inherited by direct summands, we may assume $M$ is finite semifree; then $\Tor {}BMk$ is isomorphic to $\hh{M\otimes_Bk}$, and thus bounded, while induction on  $\rank_{\nat B}\nat M$, using that each $\HH iM$ is noetherian, shows that $\hh M$ is bounded below and degreewise finite.  For (iii)$\implies$(ii), let $F\xra{\simeq}M$ be a minimal semifree resolution and note that by \eqref{eq:minimalL} $F$ has a finite semifree basis.
  \end{proof}

\begin{proof}[Proof of Theorem~\emph{\ref{thm:dgfriendly2}}]
Let $B'$ be the Koszul extension of $B$ offered by the hypothesis, and set $M'=B'\otimes_BM$ and $N'=B'\otimes_BN$.  
By parts (1) and (4) of Lemma~\ref{lem:exterior}, the $\HH 0{B'}$-modules $\hh{M'}$ and $\hh{N'}$ are bounded and 
degreewise finite, and $\Tor {}{B'}{M'}{N'}$ is bounded, by part (2) of that lemma.  By Lemma~\ref{lem:exterior}(5), $B'$ is a local DG algebra with residue field $k$. Since $B'$ is quasi-isomorphic to $A\ltimes {}^{\varepsilon}W$, it follows that $\hh W$ is degreewise finite and $\HH {< 0}W=0$. As $\hh W$ is nonzero and bounded, by hypothesis, Theorem~\ref{thm:dgfriendly} applies and yields that one of the DG $B'$-modules $M'$ and $N'$ is perfect; assume that the first one is.

The inclusion $B\subseteq B'$ commutes with the canonical augmentations to $k$.  Thus, Lemma~\ref{lem:exterior}(3) yields $\Tor {}BMk \cong \Tor {}{B'}{M'}k$.  Recalling that $M'$ is perfect over $B'$, we conclude that $M$ is perfect over $B$ by referring, twice, to Proposition~\ref{pr:perfect}.
\end{proof}

\section{Local rings}

We say that $(R,\fm,k)$ is a \emph{local ring} if $R$ is commutative noetherian ring with unique 
maximal ideal $\fm$, and $k=R/\fm$ is the residue field.  Let $e$ denote the minimal number of 
generators of $\fm$, and recall that $e-\depth R$ is non-negative.  We fix some minimal 
generating set of $\fm$ and let $K^R$ denote the Koszul complex on this set.

Clearly, local rings are precisely those  local DG algebras, in the sense of \ref{ch:localDGA}, 
which are zero in non-zero degrees.  In particular, the results of the preceding section apply 
directly to complexes over local rings with finitely generated homology.  Note that a perfect
DG $R$-module is simply one that is quasi-isomorphic to a bounded complex of finite free $R$-modules.

As a first application, we recover some known results about modules over local rings.
Formula \eqref{eq:PS} specializes to the following result of Herzog \cite[Theorem 1]{He}:
 
   \begin{proposition}
    \label{prop:Herzog}
If $(R,\fm,k)$ and $(S,\fn,k)$ are local rings, and $\alpha\colon S\to R$ and $\beta\colon R\to S$ are 
homomorphisms of rings, such that $\beta\alpha=\idmap^S$, then for every finite $S$-module $N$ 
there is an equality of formal power series
\begin{xxalignat}{3}
&\phantom{square}
&\poin RN &= \poin RS\poin SN\,.
&&\square
\end{xxalignat}
  \end{proposition}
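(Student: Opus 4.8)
The plan is to deduce Proposition~\ref{prop:Herzog} as a special case of formula \eqref{eq:PS} in Remark~\ref{re:PS}. First I would observe that a local ring $(R,\fm,k)$, by the discussion opening this section, is precisely a local DG algebra concentrated in degree zero, and that its canonical augmentation $R\to k$ is the usual surjection. So the hypotheses of Proposition~\ref{prop:Herzog} give local DG algebras $S$ and $R$ (both concentrated in degree zero), together with morphisms of DG algebras $\alpha\colon S\to R$ and $\beta\colon R\to S$; since $\alpha$ and $\beta$ are ring homomorphisms between local rings with the same residue field, they are automatically local, hence commute with the canonical augmentations to $k$.

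Next I would check that the composite $\beta\alpha=\idmap^S$ is in particular a quasi-isomorphism, so the hypotheses of Remark~\ref{re:PS} are met with this choice of $\alpha$ (and indeed $S$ is a DG algebra retract of $R$). Applying \eqref{eq:PS} to the finite $S$-module $N$, regarded as a DG $S$-module concentrated in degree zero and pulled back to a DG $R$-module along $\beta$, yields
\[
\poin RN = \poin RS \poin SN
\]
in $\BZ[[t]][t^{-1}]$. It then remains only to note that for modules over honest local rings the Poincar\'e series defined in Remark~\ref{re:PS} coincides with the classical one: $\Tor iRNk$ is concentrated in nonnegative degrees and computes $\operatorname{Tor}$ in the ordinary sense, so $\poin RN\in\BZ[[t]]$ is the usual Poincar\'e series, and similarly for $\poin RS$ and $\poin SN$. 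Hence the identity of Laurent series is an identity of formal power series, which is exactly Herzog's equality.

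I do not anticipate a genuine obstacle here: the content is entirely in Theorem~\ref{th:cdgretracts} and its packaging in Remark~\ref{re:PS}, and the remaining work is the translation between the DG and classical settings. The one point requiring a word of care is confirming that $\alpha$ and $\beta$ respect the augmentations and that everything lives in the correct half-line of degrees so that Laurent series reduce to power series; both are immediate from the fact that all the rings in sight are ordinary local rings with common residue field.
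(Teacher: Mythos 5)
Your proposal is correct and is exactly the paper's argument: the proposition is stated with a $\square$ because it is precisely the specialization of formula \eqref{eq:PS} from Remark~\ref{re:PS} (hence of Theorem~\ref{th:cdgretracts}) to DG algebras concentrated in degree zero, with the translation points you identify (augmentation-compatibility, Tor concentrated in nonnegative degrees) being the only checks needed. One small quibble: locality of $\alpha$ and $\beta$ follows from $\beta\alpha=\idmap^S$ (surjectivity of $\beta$, and units pulling back along $\beta\alpha$), not merely from the rings sharing a residue field.
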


Among the original characterizations of Golod rings, which appear in the next result, is 
the property that Massey products are defined for every finite set of elements of 
$\HH{\ges1}{K^R}$:  This is one direction of Golod's theorem in \cite{Go}.

  \begin{proposition}
    \label{prop:Golod}
Let $(R,\fm,k)$ be a local ring satisfying one of the conditions
  \begin{enumerate}[\rm\quad(a)]
    \item
$R$ is Golod; or
    \item
$R\cong S\ltimes k$ for some local ring $(S,\fn,k)$.
  \end{enumerate}

If $M$ and $N$ are finite $R$-modules and $\Tor{}RMN$ is bounded, then $M$ or $N$ has
finite projective dimension.
  \end{proposition}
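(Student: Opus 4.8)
The strategy is to reduce both cases to an application of Theorem~\ref{thm:dgfriendly2} with the Koszul complex $K^R$ playing the role of $B$ (or a Koszul extension thereof). The bridge between ``finite projective dimension of an $R$-module'' and ``perfect DG $K^R$-module'' is standard: a finite $R$-module $M$ has $\pdim_R M<\infty$ if and only if the DG $K^R$-module $K^R\otimes_R M$ is perfect, since $\Tor{}{K^R}{K^R\otimes_R M}{k}\cong\Tor{}{R}{M}{k}\otimes_R K^R\otimes_R k$ is bounded exactly when $\Tor{}RMk$ is, and we may then invoke Proposition~\ref{pr:perfect}. Likewise, the hypothesis that $\Tor{}RMN$ is bounded transfers to boundedness of $\Tor{}{K^R}{K^R\otimes_RM}{K^R\otimes_RN}$ by a change-of-rings isomorphism over the Koszul complex. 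So the real content is to verify, in each of the two cases, that \emph{some} Koszul extension of $K^R$ is quasi-isomorphic to a trivial extension $A\ltimes{}^{\varepsilon}W$ of the required shape.

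For case (b), $R\cong S\ltimes k$, this is essentially immediate from the naturality of the Koszul construction: choosing a minimal generating set of $\fm_R$ compatibly with the decomposition $\fm_R=\fm_S\oplus k$, one gets $K^R\cong K^S\ltimes{}^{\varepsilon}(K^S\otimes_S k)=K^S\ltimes{}^{\varepsilon}(\shift^{?}\text{-twisted Koszul complex on no variables})$; more precisely $K^R$ is a trivial extension of $K^S$ by a DG $k$-module built from $K^S\otimes_S k$, which is a bounded complex of finite-dimensional $k$-vector spaces with the right connectivity. Here $A=K^S$ is a local DG algebra with $\hh A$ bounded (the Koszul homology of a local ring is a finite graded $k$-algebra), and $W$ has nonzero bounded finite homology, so Theorem~\ref{thm:dgfriendly2} applies directly (indeed no Koszul extension is needed, just $B=K^R$ itself).

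Case (a), $R$ Golod, is the main obstacle and requires the structural description of the Koszul homology algebra of a Golod ring. The key fact I would use is that for a Golod ring, $K^R$ is quasi-isomorphic to a trivial extension $K'\ltimes{}^{\varepsilon}W$, where $K'$ is a Koszul extension of $K^R$ that is acyclic in positive degrees — concretely, one adjoins exterior variables $X$ killing cycles to produce a DG algebra $K^R\langle X\rangle$ whose homology in positive degrees is wiped out except for the ``Golod part,'' which splits off as a square-zero ideal. This is precisely the content of one direction of Golod's theorem (the existence of trivial Massey products), and it identifies $A$ with a DG algebra quasi-isomorphic to $R$ itself (so $\hh A=R$ is bounded, being concentrated in degree zero), while $W$ is the shifted augmentation ideal $\shift\,\widetilde{\HH{}{K^R}}$ of the Koszul homology — a finite graded $k$-vector space, nonzero since $\pdim_R k=\infty$ forces $\HH{\ges 1}{K^R}\ne 0$ when $R$ is not regular (the regular case being trivial since then every module has finite projective dimension). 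With this description in hand, Theorem~\ref{thm:dgfriendly2} applies to the Koszul extension $K^R\langle X\rangle$ of $B=K^R$, and the change-of-rings isomorphism of Lemma~\ref{lem:exterior}(3) carries perfection back down to $K^R$, hence finite projective dimension back to $R$.

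The step I expect to be genuinely delicate is making precise the claim that Golodness yields a trivial-extension presentation of a Koszul extension of $K^R$ with the homology of the ``free summand'' $A$ being bounded: this is where the hypothesis $\hh A$ bounded in Theorem~\ref{thm:dgfriendly2} gets used, and it forces one to take $A$ quasi-isomorphic to $R$ rather than to something with unbounded homology. I would phrase this using a minimal model or the Tate/acyclic-closure machinery for $R$, noting that the relevant adjunction of exterior variables only touches odd degrees and leaves $\HH 0{}$ equal to $R$, so Lemma~\ref{lem:exterior}(5) guarantees the Koszul extension is again local with residue field $k$.
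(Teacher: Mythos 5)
Your overall strategy---feed the Koszul complex, as a Koszul extension of $R$, into Theorem~\ref{thm:dgfriendly2}---is the right one, and the transfer machinery in your first paragraph is essentially Lemma~\ref{lem:exterior} together with Proposition~\ref{pr:perfect}; note, though, that Theorem~\ref{thm:dgfriendly2} is already stated with $B=R$ as the base, so this transfer is built into its proof and need not be redone. The problem is that both of the structural claims on which your argument actually rests are gaps.

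In case (b), the claimed decomposition $K^R\cong K^S\ltimes{}^{\varepsilon}(\cdots)$ with the complement a DG $k$-module is false: writing $K^R=R\langle e_1,\dots,e_s,f\rangle$ with $\partial f=w$ the socle generator, the complement of $K^S$ contains $f$ and $e_if\ne 0$, so it is neither square-zero nor annihilated by $(K^S)_{\ges 1}$. No such detour is needed: $R=S\ltimes k$ is \emph{itself} of the form $A\ltimes{}^{\varepsilon}W$ with $A=S$ (a local DG algebra concentrated in degree $0$, so $\hh A$ is bounded) and $W=k$, and Theorem~\ref{thm:dgfriendly} applies directly to $B=R$. In case (a), the key assertion---that some Koszul extension $K^R\langle X\rangle$ is quasi-isomorphic to $K'\ltimes W$ with $K'\simeq R$---is left unproved, is not the standard form of Golod's theorem, and your stated reason for insisting on $A\simeq R$ (boundedness of $\hh A$) is a red herring, since the field $k$ has bounded homology too. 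The fact one actually needs, from \cite[2.3]{Av}, is that $R$ is Golod if and only if $K^R$ itself is quasi-isomorphic to $k\ltimes W$ for a graded $k$-vector space $W$; no further adjunction of variables is required. With $A=k$ and $W\cong\HH{{\ges}1}{K^R}\ne 0$ (as one may assume $R$ is not regular), Theorem~\ref{thm:dgfriendly2} then applies to $B=R$ with Koszul extension $K^R$. As written, your argument establishes neither case: the trivial-extension presentations driving it are, respectively, incorrect and unsubstantiated.
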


  \begin{Remark}
Case (b) of the proposition is due to Nasseh and Yoshino, \cite[3.1]{NY}

In case (a), the conclusion is evident when $e=\edim R$, as then $R$ is regular.  If $e=\depth R+1$, 
then $R$ is a hypersurface ring, and the result is due to Huneke and Wiegand \cite[1.9]{HW}.  
For $e\ge\depth R+2$ the result is proved by Jorgensen \cite[3.1]{Jo}.  

Each one of those theorems required a different proof.
  \end{Remark}
  
  \begin{proof}
In case (b) the conclusion follows directly from Theorem \ref{thm:dgfriendly}.

It is proved in \cite[2.3]{Av} that all Massey products exist if and only $K^R\simeq k\ltimes W$ holds with some graded $k$-vector space $W$. We may assume $R$ is not regular, so that $W$ is nonzero. As $K^R$ is a Koszul extension of~$R$, Theorem \ref{thm:dgfriendly2} applies and shows that $M$ or $N$ is quasi-isomorphic to a bounded complex of free $R$-modules; that is, $\pdim_RM$ or $\pdim_RN$ is finite.
  \end{proof}
 
The value for local rings of the general form of Theorem \ref{thm:dgfriendly2} is demonstrated by the proof
of the next theorem, on which much of our work in \cite{AINW} depends.

As usual $\wh R$ denotes the $\fm$-adic completion of $R$.  Recall that Cohen's Structure Theorem yields an isomorphism
$\wh R\cong P/I$ for some regular local ring $(P,\fp,k)$ and ideal $I$ contained in $\fp^{2}$; any such isomorphism
is called a \emph{minimal Cohen presentation} of $\wh R$.

\begin{theorem}
\label{th:local}
Let $R$ be a local ring. Assume there exists a minimal Cohen presentation $\wh R\cong P/I$ satisfying
  \begin{enumerate}[\quad\rm(a)]
   \item
some minimal free resolution of $\wh R$ over $P$ has a structure of DG algebra; and
   \item
the $k$-algebra $B=\Tor{}P{\wh R}k$ is isomorphic to the trivial extension $A\ltimes W$ of a 
graded $k$-algebra $A$ by a graded $A$-module $W\ne0$ with $A_{\ges 1}\cdot W=0$.
  \end{enumerate}

If $M$ and $N$ are finite $R$-modules and $\Tor{}RMN$ is bounded, then $M$ or $N$ 
has finite projective dimension.
  \end{theorem}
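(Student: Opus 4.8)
The plan is to reduce Theorem~\ref{th:local} to Theorem~\ref{thm:dgfriendly2} by replacing $R$ with its completion and then exhibiting a Koszul extension of the Koszul complex $K^R$ that is quasi-isomorphic to a trivial extension of a local DG algebra by a DG $k$-module. First I would observe that since $\Tor iRMN$ is unaffected by completion and a finite $R$-module has finite projective dimension over $R$ if and only if its completion does over $\wh R$, we may replace $R$ by $\wh R$ and thus assume $R=P/I$ with $(P,\fp,k)$ regular and $I\subseteq\fp^2$; set $e=\edim R=\dim P$. Let $F$ denote the minimal free resolution of $R=P/I$ over $P$; by hypothesis~(a) it carries a DG algebra structure, and since the resolution is minimal, $F\otimes_Pk$ is a DG $k$-algebra whose underlying complex has zero differential, so there is an isomorphism of DG $k$-algebras $F\otimes_Pk\cong B=\Tor{}P{\wh R}k$.

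The second step is to relate $F\otimes_Pk$ to a Koszul extension of $K^R$. The standard comparison here is that $K^R\simeq K^P\otimes_PF$, where $K^P$ is the Koszul complex of $P$ on a minimal generating set of $\fp$; since $K^P$ is a minimal free resolution of $k$ over $P$ and carries its own DG algebra structure, $K^P\otimes_PF$ is a DG $P$-algebra with $\hh{K^P\otimes_PF}=\hh{K^R}$ concentrated appropriately, and reducing mod $\fp$ one gets that $K^R$ is quasi-isomorphic, as a DG algebra, to a construction built over $k$ from $F\otimes_Pk\cong B$. More precisely, $K^P$ is itself a Koszul extension of $P$ on a regular sequence of length $e$, so $K^R=K^P\otimes_PR$ is a Koszul extension of $R$; and since $F\xra{\simeq}R$ is a DG algebra quasi-isomorphism over $P$, tensoring gives a DG algebra quasi-isomorphism $K^R=K^P\otimes_PR\simeq K^P\otimes_PF$, while $K^P\otimes_PF$ in turn admits $k\otimes_PF=B$ as a DG algebra quasi-isomorphic quotient after killing the (regular, exterior-variable-supported) acyclic part—more carefully, one uses that $K^P\otimes_P F$ is obtained from $B$ by a Koszul extension on the $e$ exterior variables coming from $K^P$, whose boundaries lie in $\fp\cdot(F\otimes_Pk)$ hence are zero in $B$. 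Thus some Koszul extension of $K^R$—namely a Koszul extension of $R$, then base-changed appropriately—is quasi-isomorphic as a DG algebra to $B=A\ltimes W$ by hypothesis~(b).

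Finally I would invoke Theorem~\ref{thm:dgfriendly2}. Taking $A$ (the graded $k$-algebra of hypothesis~(b)) as a DG algebra concentrated in homological degrees with zero differential, $(A,\fm,k)$ with $\fm=A_{\ges1}$ is a local DG algebra: it is degreewise finite because $B=\Tor{}P{\wh R}k$ is, and $\HH0A=A_0=k\ne0$; moreover $\hh A=A$ is bounded because $B$ vanishes in degrees $>e$. Likewise $W$, viewed as a DG $k$-module with zero differential sitting in degrees $\ges 1$ (as $A_{\ges1}W=0$ forces $W$ to live where $B$ does, i.e.\ in degrees $1,\dots,e$, and in particular $\HH{<0}W=0$; one needs here that the trivial-extension decomposition respects the grading, placing $W$ in positive degrees), has $\hh W=W$ nonzero, bounded, and degreewise finite. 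With $K^R$ in the role of ``$B$'' of Theorem~\ref{thm:dgfriendly2} (it is a local DG algebra with residue field $k$ by Lemma~\ref{lem:exterior}(5), since $K^R_0=R$ and $\partial X$ maps into $\fm$), and with $M$, $N$ regarded as DG $K^R$-modules via $R\hookrightarrow K^R$—for which $\hh{M}=M$, $\hh N=N$ are finite, and $\Tor{}{K^R}MN\cong\Tor{}RMN$ is bounded by the quasi-isomorphism $K^R\simeq R$ together with Lemma~\ref{lem:exterior}(3)—Theorem~\ref{thm:dgfriendly2} yields that $M$ or $N$ is perfect over $K^R$, equivalently over $R$, i.e.\ has finite projective dimension.

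The main obstacle is the second step: making precise, at the level of DG algebras and not merely complexes, that a Koszul extension of $K^R$ is quasi-isomorphic to $B=\Tor{}P{\wh R}k$, and tracking that the trivial-extension summand $W$ of $B$ ends up in strictly positive homological degrees so that the hypothesis $\HH{<0}W=0\ne\hh W$ of Theorem~\ref{thm:dgfriendly2} is met. This hinges on the minimality of the resolutions $F$ and $K^P$—which forces the differentials of $F\otimes_Pk$ and $K^P\otimes_Pk$ to vanish—and on the multiplicativity of the quasi-isomorphism $F\xra{\simeq}R$ of hypothesis~(a), which is exactly what lets us pass from complexes to DG algebras.
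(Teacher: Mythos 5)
Your overall strategy coincides with the paper's: complete, use the DG algebra structure on the minimal $P$-free resolution $F$ of $\wh R$, link $K^R$ to $\Tor{}P{\wh R}k$ through the zig-zag of DG algebra quasi-isomorphisms $K^R=R\otimes_PK^P\xleftarrow{\ \simeq\ }F\otimes_PK^P\xrightarrow{\ \simeq\ }F\otimes_Pk$ (each arrow being an augmentation quasi-isomorphism tensored with a bounded complex of free $P$-modules), and then feed hypothesis (b) into Theorem~\ref{thm:dgfriendly2}. Your ``more careful'' gloss that $K^P\otimes_PF$ is ``obtained from $B$ by a Koszul extension on the $e$ exterior variables'' is backwards---$K^P\otimes_PF$ has degree-zero component $P$, not $k$, so it is not a Koszul extension of $k\otimes_PF$---but the surrounding assertion (the augmentation $K^P\otimes_PF\to k\otimes_PF$ is a surjective quasi-isomorphism of DG algebras) is the correct one and is exactly what the paper uses. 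Your worry about $\HH{<0}W=0$ is also moot: Theorem~\ref{thm:dgfriendly2} only asks that $\hh W$ be nonzero and bounded, and derives the positivity internally.

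The one genuine misstep is in the final invocation of Theorem~\ref{thm:dgfriendly2}. You put $K^R$ in the role of the local DG algebra ``$B$'' of that theorem and then claim to regard $M$ and $N$ ``as DG $K^R$-modules via $R\hookrightarrow K^R$,'' with $\Tor{}{K^R}MN\cong\Tor{}RMN$ ``by the quasi-isomorphism $K^R\simeq R$.'' Neither assertion stands: restriction of scalars along $R\hookrightarrow K^R$ turns $K^R$-modules into $R$-modules, not the other way around, and $K^R$ is not quasi-isomorphic to $R$ (its homology is the Koszul homology of $R$, which is $k$ in top degree when $R$ is Cohen--Macaulay, not $R$). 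The correct application---which you in fact set up yourself when you note that $K^R=K^P\otimes_PR$ is a Koszul extension of $R$---takes $R$ itself as the local DG algebra ``$B$'' of Theorem~\ref{thm:dgfriendly2}, with $K^R$ as the Koszul extension quasi-isomorphic to $A\ltimes W$; then $M$ and $N$ are literally DG $R$-modules with bounded, degreewise finite homology and bounded $\Tor{}RMN$, and the theorem directly yields that one of them is perfect over $R$, i.e., of finite projective dimension. (If you insist on working over $K^R$, you must replace $M$ and $N$ by $K^R\otimes_RM$ and $K^R\otimes_RN$ and transfer boundedness and perfection back and forth via Lemma~\ref{lem:exterior} and Proposition~\ref{pr:perfect}---which is precisely the reduction that the proof of Theorem~\ref{thm:dgfriendly2} already performs for you.)
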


\begin{proof}
In view of the faithful flatness of completions, the canonical isomorphisms
   \begin{align*}
\Tor{}{\wh R}{\wh R \otimes_RM}{\wh R\otimes_RN}&\cong\wh R\otimes_R\Tor{}RMN
   \\
\Tor{}{\wh R}{\wh R\otimes_RM}k&\cong\wh R\otimes_R\Tor{}RMk\cong\Tor{}RMk
   \end{align*}
show that we may assume that $R$ is complete, and hence $R\cong P/I$.

Let $K^P$ denote the Koszul complex on a minimal set of generators of $\fp$. It is a local DG algebra, in the sense of 
\ref{ch:localDGA}, and as $P$ is regular it has $\HH{}K\cong k$.

By (a), there is a DG $P$-algebra $B$, semifree as a DG $P$-module, with $\HH{}B=R$ and  $\partial(B)\subseteq\fp B$.  These properties yield the equality and the last isomorphism in the following string
\[
K^R\cong R\otimes_{P}K^P  \xleftarrow{\ \simeq\ } B\otimes_{P} K^P \xra{\ \simeq\ } B \otimes_{P} k =\HH{}{B \otimes_{P} k} \cong \Tor{}PRk
\]
of morphisms of DG algebras.  The quasi-isomorphisms are obtained by tensoring the augmentations $B\xra{\simeq} R$ and 
$K^P\xra{\simeq}k$ with  the bounded complexes of free $P$-modules $K$ and $B$, respectively. Due to (b), we get 
$K^R\simeq A\ltimes W$.  

As $K^R$ is a Koszul extension of $R$, Theorem~\ref{thm:dgfriendly2} yields the desired conclusion.
  \end{proof}

\appendix

\section{On differential graded modules}
\label{On differential graded modules}

This section is a collection of basic facts concerning DG modules over DG algebras, used in the body of the article. 
In most cases, further details (occasionally stated using slightly different terminology) can be found in \cite[Section~1]{AFL}.

\begin{chunk}
  \label{ch:DG}
Let $B$ be a DG algebra and $M$ a DG $B$-module.  Both are $\BZ$-graded and all their elements 
are homogenous.  We say that $M$ is \emph{bounded below} if $M_i=0$ for $i\ll0$, \emph{bounded}
if $M_i=0$ for $|i|\gg0$, and \emph{non-negative} if $M_i=0$ for $i<0$. Set
\[
\inf M:=\inf\{ i\mid M_{i}\ne 0\}\quad\text{and}\quad \sup M:=\sup\{ i\mid M_{i}\ne 0\}\,.
\]

We write $\shift M$ for the left DG $B$-module with $M_{n-1}$ as component of degree $n$, 
$\partial^{\shift M} (m) = - \partial^{M} m$, and $B$ acting by $b\cdot m = (-1)^{|b|}bm$, where $|b|$ is the degree of $b$.

The homology $\hh M$ is a graded module over the graded ring $\hh B$.  In particular, $\HH 0B$ is a ring and 
each $\HH iM$ is a left $\HH 0B$-module.  When these modules are finite for all  $i\in\BZ$, we say 
that $\hh M$ is \emph{degreewise finite}.
Morphisms of DG objects that induce isomorphisms in homology are called \emph{quasi-isomorphisms}.

The $\BZ$-graded ring underlying $B$ is denoted by $\nat B$, and $\nat M$ denotes the $\BZ$-graded left 
$\nat B$-module underlying $M$. 
 \end{chunk}

\begin{chunk}
  \label{ch:semifree}
Let $F$ be left DG $B$-module.  A \emph{semibasis} of $F$ is a well-ordered subset $\{\bsf\}$ of $F$, which is a
basis of $\nat F$ over $\nat R$ and satisfies $d(f)\in\sum_{e<f}Be$ for each $f$ in $\bsf$.  

A DG $B$-module that has a (finite) semibasis is said to be (\emph{finite}) \emph{semifree}.

A DG $B$-module that is quasi-isomorphic to a direct summand of some finite semifree DG $B$-module is called 
\emph{perfect}.
 \end{chunk}

\begin{chunk}
\label{ch:tor}
Each left DG $B$-module $M$ admits a \emph{semifree resolution}; that is, a quasi-isomorphism of left DG $B$-modules $F\to M$ 
with $F$ semifree; see \cite[\S1.11]{AH}.  After choosing a resolution for each $M$, for every right DG $B$-module $L$ one sets
\[
\Tor {}BLM = \hh{L\otimes_{B}F}\,.
\]
The result is independent of the choice of semifree resolutions; see \cite[Remark 1.14]{AH}.
  \end{chunk}

\begin{chunk}
\label{ch:semiflat}
A left DG $B$-module $F$ is said to be \emph{semiflat} if the functor $(?\otimes_{B}F)$ preserves injective quasi-isomorphisms of right DG $B$-modules; equivalently, $(?\otimes_{B}F)$ preserves quasi-isomorphisms and the graded $\nat B$-module $\nat F$ is flat. 

If $F\to G$ is a quasi-isomorphism of semiflat left DG $B$-modules, then the induced map $L\otimes_{B}F\to L\otimes_{B}G$ is a quasi-isomorphism
for every right DG $B$-module $L$. 

Semifree DG modules are semiflat. If $F\xra{\simeq} M$ is a quasi-isomorphism with $F$ semiflat, then for any right DG $B$-module $L$, there is an isomorphism
\[
\Tor {}BLM \cong \hh{L\otimes_{B}F}\,.
\]
\end{chunk}

\begin{chunk}
\label{ch:flat-maps}
Let $\beta\colon B\to C$ be a morphism of DG algebras. 

If $M$ is a semiflat left DG $B$-module, then $C\otimes_{B}M$ is a semiflat left DG $C$-module.

If ${}^{\beta}C$ is semiflat and $N$ is a semiflat  left DG $C$-module, then  ${}^{\beta}N$ is semiflat. 
\end{chunk}

\begin{chunk}
\label{ch:tor2} 
Let $\beta\colon B\to C$ be a morphism of DG algebras.  Let $L$ and $L'$ be right DG modules and 
$M$ and $M'$ be a left DG modules, over $B$ and $C$, respectively.  

Morphisms of complexes 
$\lambda\colon L\to L'$ and $\mu\colon M\to M'$ are called $\beta$-\emph{equivariant} if 
\[
\lambda(l b) = \lambda(l)\beta(b) \quad\text{and}\quad \mu(bm) = \beta(b)\mu(m)
\]
hold for all $b\in B$, $l\in L$ and $m\in M$.  Such maps define a natural homomorphism 
\[
\Tor{}{\beta}{\lambda}{\mu}\colon \Tor{}BLM \lra \Tor{}{C}{L'}{M'}\,.
\]
of graded abelian groups.  It is bijective if $\hh\beta$, $\hh\lambda$, and $\hh\mu$ are; see \cite[1.5]{AFL}.
\end{chunk}

   \begin{chunk}
      \label{ch:equivalence}
Two DG algebras $B$ and $C$ are said to be \emph{quasi-isomorphic} if there exists a chain $\mathsf{f}$ of quasi-isomorphisms 
of DG algebras linking $B$ and $C$.

Such a chain $\mathsf{f}$ yields an isomorphism $\mathsf{f}_*\colon \hh{B}\xra{\cong}\hh{C}$ of graded rings.  To each right 
DG $B$-module $L$ and left DG $B$-module $M$ it assigns a right DG $C$-module $\mathsf{f}L$, a left DG $C$-module 
$\mathsf{f}M$,  isomorphisms $\hh{L}\xra{\cong}\hh{\mathsf{f}L}$ and $\hh{M}\xra{\cong}\hh{\mathsf{f}M}$ that are
$\mathsf{f}_*$-equivariant, and an isomorphism $\Tor{}BLM\xra{\cong}\Tor{}{C}{\mathsf{f}L}{\mathsf{f}M}$.

In addition, $M$ is perfect over $B$ if and only if $\mathsf{f}M$ is perfect over $C$.

These statements reflect various properties of a triangle equivalence, induced by~$\mathsf{f}$, 
of the derived categories of DG $B$-modules and DG $C$-modules; see \cite[3.6.2]{ABIM}.
  \end{chunk}

\begin{chunk}
A DG algebra $B$ is \emph{graded-commutative} if all $b$, $b'$ in $B$ satisfy
\[
b\cdot b' = (-1)^{|b||b'|}b'\cdot b \quad \text{and}\quad \text{$b^{2}=0$ when $|b|$ is odd}
\]
Every right DG $B$-module $M$ then is canonically a left DG $B$-module, with action 
\[
b\cdot m := (-1)^{|b||m|}mb\quad\text{for $b\in B$ and $m\in M$}\,,
\]
so when speaking of DG $B$-modules, we drop references to `left' or `right'; in particular, 
this refers to semifreeness and semiflatness. When $L$ and $M$ are DG $B$-modules,
$\Tor{}BLM$ is a graded $\hh B$-module and there is an $\hh B$-linear isomorphism 
\[
\Tor{}BLM \cong \Tor{}BML\,. 
\]
  \end{chunk}

We record a basic fact on the existence of resolutions that are also DG algebras.

\begin{chunk}
\label{ch:resolutions-map}
Each morphism $\beta\colon B\to C$ of graded-commutative DG algebras can be factored as
\[
B\xra{\ \iota\ } \wt C \overset{\ \epsilon\ }\twoheadrightarrow C
\]
where $\iota$ and $\epsilon$ are morphisms of DG algebras, $\iota$ is injective, $\epsilon$ is a surjective quasi-isomorphism,
and the DG $B$-module $\Coker(\iota)$ (hence also $\wt C$) is semiflat.

Any such factorization is called a \emph{semiflat DG algebra resolution of $\beta$}.
  \end{chunk}

\end{document}